\documentclass[11pt,oneside]{amsart}%
\usepackage{amssymb,amsmath,amsfonts,latexsym,amsthm,geometry,graphicx}
\usepackage{amsmath}
\usepackage{amsfonts}
\usepackage{color}
\usepackage{amssymb}
\usepackage{graphicx}
\usepackage[utf8]{inputenc}%
\setcounter{MaxMatrixCols}{30}
\providecommand{\U}[1]{\protect\rule{.1in}{.1in}}
\geometry{left=2.5cm,right=2.2cm,top=3cm,bottom=2cm,headheight=2.5mm}

\newtheorem{theorem}{Theorem}[section]
\newtheorem{corollary}[theorem]{Corollary}
\newtheorem{proposition}[theorem]{Proposition}
\newtheorem{lemma}[theorem]{Lemma}

\theoremstyle{definition}

\begin{document}
\title{Geometry of the closed unit ball of the space of bilinear forms on
$\ell_{\infty}^{2}$}
\author[W. Cavalcante]{Wasthenny Cavalcante}
\address{Departamento de Matem\'{a}tica - Federal University of Pernambuco - Recife - Brazil}
\email{wasthenny.wc@gmail.com and wasthenny@dmat.ufpe.br}
\author[D. Pellegrino]{Daniel Pellegrino}
\address{Departamento de Matem\'{a}tica \\
Universidade Federal da Para\'{\i}ba \\
58.051-900 - Jo\~{a}o Pessoa, Brazil.}
\email{pellegrino@pq.cnpq.br and dmpellegrino@gmail.com}
\thanks{2010 Mathematics Subject Classification: 46G25}
\thanks{Key words: Bilinear forms, Banach spaces, Littlewood's $4/3$ inequality}
\thanks{W. Cavalcante is supported by Capes and D. Pellegrino is supported by CNPq}

\begin{abstract}
We obtain all extreme and exposed points of the closed unit ball of the space
of bilinear forms $T:\ell_{\infty}^{2}\times\ell_{\infty}^{2}\rightarrow
\mathbb{R}.$ We also show that any (norm one) bilinear form $T:\ell_{\infty
}^{2}\times\ell_{\infty}^{2}\rightarrow\mathbb{R}$ for which the optimal
constant of the Littlewood's $4/3$ inequality is achieved is necessarily an
extreme point. In the case of complex scalars we combine analytical and
numerical evidences supporting that, at least for complex bilinear forms with
real coefficients, the optimal constant of Littlewood's $4/3$ inequality seems
to the the trivial, i.e., $1.$

\end{abstract}
\maketitle

\section{Introduction}

Given a Banach space $E$ and a convex set $A\subset E$, a vector $x\in A$ is
called an extreme point of $A$ if $y,z\in A$ with $x=\frac{y+z}{2}$ implies
$y=z.$ The characterization of extreme points of certain Banach spaces is a
fruitful subject of investigation (see \cite{choi1, choi2, choi3, grecu, kim}
and references therein) and the identification of extreme points of the closed
unit ball of certain spaces of polynomials and bilinear forms has been quite
useful in certain optimization problems (see, for instance, \cite{caval}). For
a detailed exposition of the subject we refer to \cite{no}. In the present
paper we present all extreme and exposed points of the closed unit ball of
$\mathcal{L}(^{2}\ell_{\infty}^{2}(\mathbb{R}))$, i.e., the space of all
bilinear forms $T:\ell_{\infty}^{2}\times\ell_{\infty}^{2}\rightarrow
\mathbb{R}$. We also investigate the optimization problem in the closed unit
ball $B_{\mathcal{L}(^{2}\ell_{\infty}^{2}(\mathbb{R}))}$ of $\mathcal{L}%
(^{2}\ell_{\infty}^{2}(\mathbb{R}))$ associated to the best constant of
Littlewood's $4/3$ inequality. More precisely we obtain all bilinear forms in
the closed unit ball of $\mathcal{L}(^{2}\ell_{\infty}^{2}(\mathbb{R}))$ such
that the optimal constant of Littlewood's $4/3$ inequality is achieved.

The paper is organized as follows. In Section 2 we obtain expressions for the
norms of bilinear forms $T:\ell_{\infty}^{2}\times\ell_{\infty}^{2}%
\rightarrow\mathbb{R}$ and $T:\ell_{\infty}^{2}\times\ell_{\infty}%
^{2}\rightarrow\mathbb{C}$ with real coefficients. In Section 3 these results
are used to classify the extreme and exposed points of the closed unit ball of
$\mathcal{L}(^{2}\ell_{\infty}^{2}(\mathbb{R}))$. In Section 4 we use the
results of Section 2 to present all bilinear forms $T:\ell_{\infty}^{2}%
\times\ell_{\infty}^{2}\rightarrow\mathbb{R}$ for which the optimal constant
of the Littlewood's $4/3$ inequality is achieved. More precisely, we obtain
all bilinear forms satisfying the following optimization problem:%

\[
\inf\left\{  \left(  \sum_{j_{1},j_{2}=1}^{2}\left\vert T(e_{j_{1}},e_{j_{2}%
})\right\vert ^{\frac{4}{3}}\right)  ^{\frac{3}{4}}\!\!\!\!\!,\text{ among all
}2\text{--linear forms }T\in B_{\mathcal{L}(^{2}\ell_{\infty}^{2}%
(\mathbb{R}))}\right\}  =\sqrt{2}.
\]
In Section 5 we consider the complex version of this problem and, combining
analytical and numerical approaches, we obtain strong evidence supporting
that, at least for complex bilinear forms with real coefficients, the optimal
constant of the Littlewood's $4/3$ inequality seems to the the trivial, i.e.,
$1.$

\section{Expressions for the norms of bilinear forms on $\mathcal{L}(^{2}%
\ell_{\infty}^{2}(\mathbb{K}))$\label{norms}}

A first step to determine the geometry of the unit ball of $\mathcal{L}%
(^{2}\ell_{\infty}^{2}(\mathbb{K}))$ is to find expressions for the norms.
This is not a very pleasant task, mainly in the case of complex scalars.

\begin{proposition}
\label{normcomp} Let $T:c_{0}\times c_{0}\rightarrow\mathbb{C}$ be given by
$T(z,w)=\sum_{i,j=1}^{2}a_{ij}z_{i}w_{j}$ with $a_{ij}\in\mathbb{R}$. Then

(A)
\[
\left\Vert T\right\Vert =\max\left\{
\begin{array}
[c]{c}%
|a_{11}+a_{21}|+|a_{12}+a_{22}|,|a_{11}-a_{21}|+|a_{12}-a_{22}|,\\
\sqrt{a_{11}^{2}+a_{21}^{2}+2a_{11}a_{21}\frac{\frac{a_{11}^{2}a_{21}^{2}%
}{a_{12}^{2}a_{22}^{2}}(a_{12}^{2}+a_{22}^{2})-(a_{11}^{2}+a_{21}^{2}%
)}{2a_{11}a_{21}(1-\frac{a_{11}a_{21}}{a_{12}a_{22}})}}\\
+\sqrt{a_{12}^{2}+a_{22}^{2}+2a_{12}a_{22}\frac{\frac{a_{11}^{2}a_{21}^{2}%
}{a_{12}^{2}a_{22}^{2}}(a_{12}^{2}+a_{22}^{2})-(a_{11}^{2}+a_{21}^{2}%
)}{2a_{11}a_{21}(1-\frac{a_{11}a_{21}}{a_{12}a_{22}})}}%
\end{array}
\right\}
\]
if $\left(  a_{11},a_{21},a_{12},a_{22}\right)  \in\left(  \mathbb{R}%
\setminus\{0\}\right)  ^{4}$ and $sgn\left(  \frac{a_{11}a_{21}}{a_{12}a_{22}%
}\right)  =-1$ and%

\[
\left\vert \frac{a_{11}^{2}a_{21}^{2}}{a_{12}^{2}a_{22}^{2}}\left(  a_{12}%
^{2}+a_{22}^{2}\right)  -\left(  a_{11}^{2}+a_{21}^{2}\right)  \right\vert
\leq\left\vert 2a_{11}a_{21}\left(  1-\frac{a_{11}a_{21}}{ a_{12}a_{22}%
}\right)  \right\vert ;
\]
(B)
\[
\left\Vert T\right\Vert =\max\left\{  |a_{11}+a_{21}|+|a_{12}+a_{22}%
|,|a_{11}-a_{21}|+|a_{12}-a_{22}|\right\}
\]
otherwise.
\end{proposition}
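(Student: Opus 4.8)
The plan is to reduce the two-variable complex optimization to maximizing a single concave function of one real variable, and then to identify the three candidate quantities as the two endpoint values and one interior critical value. First I would peel off the $w$-variable: for fixed $z=(z_{1},z_{2})$ with $|z_{i}|\le 1$, the map $w\mapsto T(z,w)=\sum_{j}\big(\sum_{i}a_{ij}z_{i}\big)w_{j}$ is a linear functional on $\ell_{\infty}^{2}$, so its supremum over the unit ball is the $\ell_{1}$ norm of its coefficients, namely $|a_{11}z_{1}+a_{21}z_{2}|+|a_{12}z_{1}+a_{22}z_{2}|$. (Since the form involves only the first two coordinates, the norm on $c_{0}$ agrees with that on $\ell_{\infty}^{2}$.) Thus
\[
\|T\|=\sup_{|z_{1}|,|z_{2}|\le 1}\big(|a_{11}z_{1}+a_{21}z_{2}|+|a_{12}z_{1}+a_{22}z_{2}|\big).
\]
The integrand is a sum of convex functions of $(z_{1},z_{2})$, so the supremum over the bidisc is attained on its distinguished boundary $|z_{1}|=|z_{2}|=1$; and because the $a_{ij}$ are real, multiplying $(z_{1},z_{2})$ by a common unimodular factor preserves each modulus, so I may set $z_{1}=1$ and $z_{2}=e^{i\theta}$. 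Using $|a+be^{i\theta}|^{2}=a^{2}+b^{2}+2ab\cos\theta$ and substituting $t=\cos\theta\in[-1,1]$, the problem becomes that of maximizing
\[
g(t)=\sqrt{a_{11}^{2}+a_{21}^{2}+2a_{11}a_{21}t}+\sqrt{a_{12}^{2}+a_{22}^{2}+2a_{12}a_{22}t}
\]
over $t\in[-1,1]$.

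Next I would analyze $g$. Each summand has the form $\sqrt{u+pt}$ with $u\ge|p|$ (since $a^{2}+b^{2}\ge 2|ab|$), hence is nonnegative and strictly concave on the relevant range; therefore $g$ is strictly concave, $g'$ is strictly decreasing, and $g$ has at most one interior critical point, which is necessarily a maximum. The endpoints give $g(1)=|a_{11}+a_{21}|+|a_{12}+a_{22}|$ and $g(-1)=|a_{11}-a_{21}|+|a_{12}-a_{22}|$, the two entries of case (B). Setting $g'(t)=0$, i.e. $\frac{a_{11}a_{21}}{\sqrt{u+pt}}+\frac{a_{12}a_{22}}{\sqrt{v+qt}}=0$, squaring, and cross-multiplying yields a single linear equation for the critical abscissa; writing $\rho=\frac{a_{11}a_{21}}{a_{12}a_{22}}$, a short computation gives
\[
t^{\ast}=\frac{\frac{a_{11}^{2}a_{21}^{2}}{a_{12}^{2}a_{22}^{2}}(a_{12}^{2}+a_{22}^{2})-(a_{11}^{2}+a_{21}^{2})}{2a_{11}a_{21}\big(1-\frac{a_{11}a_{21}}{a_{12}a_{22}}\big)},
\]
so that $g(t^{\ast})$ is precisely the third expression in case (A).

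It then remains to decide when this interior critical point is admissible. Squaring $g'(t)=0$ is legitimate only when the two terms can cancel, which forces $a_{11}a_{21}$ and $a_{12}a_{22}$ to have opposite signs, i.e. $\operatorname{sgn}(\rho)=-1$; under this sign condition the unique root of the linear equation is a genuine critical point of $g$. Moreover $t^{\ast}\in[-1,1]$ exactly when $|t^{\ast}|\le 1$, which rearranges to the displayed inequality $\big|\rho^{2}(a_{12}^{2}+a_{22}^{2})-(a_{11}^{2}+a_{21}^{2})\big|\le\big|2a_{11}a_{21}(1-\rho)\big|$. When these hold (together with all $a_{ij}\ne 0$, needed for $\rho$ and $t^{\ast}$ to be defined), concavity gives $g(t^{\ast})\ge g(\pm 1)$, so $\|T\|=g(t^{\ast})$, which is (A). In every other case there is no admissible interior critical point, so by concavity the maximum is attained at an endpoint and equals $\max\{g(1),g(-1)\}$, which is (B).

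The main obstacle I anticipate is the bookkeeping in this last step rather than any conceptual difficulty. Concretely, the real work is (i) confirming that the opposite-sign hypothesis is exactly what makes the squaring reversible, so that no spurious branch is mistaken for a critical point, (ii) matching the condition $|t^{\ast}|\le 1$ with the precise algebraic inequality in the statement, and (iii) verifying that the degenerate configurations in which some $a_{ij}=0$ (where $\rho$ or $t^{\ast}$ fails to be defined) all fall correctly into case (B), since there a vanishing $p$ or $q$ makes $g$ monotone with its maximum at an endpoint.
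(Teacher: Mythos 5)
Your proof is correct, and its skeleton is the same as the paper's: freeze $z$, observe that the norm of the resulting linear functional in $w$ is the $\ell_{1}$-norm of its coefficients, pass to unimodular $z_{1},z_{2}$, and use $|a+be^{i\theta}|^{2}=a^{2}+b^{2}+2ab\cos\theta$ to reduce everything to a one-variable maximization whose interior critical value is the third expression in (A). Where you genuinely differ is in how that maximization is organized. The paper keeps the angle as the variable, computes $f'$, and then runs a long enumeration: a separate argument for the non-differentiable points arising when $a_{11}=\pm a_{21}$ or $a_{12}=\pm a_{22}$, four sub-situations for vanishing coefficients, the same-sign case, and the opposite-sign case with the inadmissibility inequality. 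Your substitution $t=\cos\theta\in[-1,1]$ makes $g$ a sum of concave functions of $t$ (each radicand is affine with $u\ge|p|$, so it is nonnegative on $[-1,1]$ and can vanish only at an endpoint), and concavity then does all the bookkeeping at once: $g'$ is decreasing on $(-1,1)$, so there is at most one interior critical point; it exists exactly under the opposite-sign condition (which is also what makes your squaring step reversible, so no spurious root) together with $|t^{\ast}|\le 1$, which is precisely the displayed inequality since the denominator $2a_{11}a_{21}(1-\tfrac{a_{11}a_{21}}{a_{12}a_{22}})$ is nonzero there; when it exists it dominates the endpoint values, giving (A); in every other configuration $g'$ has constant sign (or $g$ is constant), so the maximum is at $t=\pm1$, giving (B). This subsumes, without separate treatment, all the degenerate cases the paper handles by hand. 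A further small gain in rigor: you justify restricting to $|z_{1}|=|z_{2}|=1$ by convexity (the maximum of a convex function over the bidisc is attained at its extreme points, i.e.\ on the torus), a step the paper takes implicitly by writing $z_{j}=\cos\theta_{j}+i\sin\theta_{j}$ outright. Two cosmetic slips, neither harmful: strict concavity requires $a_{11}a_{21}\neq0$ and $a_{12}a_{22}\neq0$ (you handle the degenerate cases separately anyway), and the rotation $z\mapsto\lambda z$ preserves the two moduli for arbitrary complex coefficients -- reality of the $a_{ij}$ is needed only for the cosine identity.
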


\begin{proof}
Note that
\[
\left\Vert T\right\Vert =\sup\{\left\Vert T_{z}\right\Vert :\Vert
z\Vert_{\infty}=1\},
\]
where $T_{z}:\ell_{\infty}^{2}(\mathbb{C})\rightarrow\mathbb{C}$ is given by
\[
T_{z}(w)=(a_{11}z_{1}+a_{21}z_{2})w_{1}+(a_{12}z_{1}+a_{22}z_{2})w_{2}.
\]
We thus have
\[
\Vert T\Vert=\sup\left\{  \Vert T_{z}\Vert:\Vert z\Vert_{\infty}=1\right\}
=\sup\left\{  \left\vert a_{11}z_{1}+a_{21}z_{2}\right\vert +\left\vert
a_{12}z_{1}+a_{22}z_{2}\right\vert :\Vert z\Vert_{\infty}=1\right\}  .
\]
Hence, calculating $\Vert T\Vert$ is the same of maximizing the function
\[
f(z)=|a_{11}z_{1}+a_{21}z_{2}|+|a_{12}z_{1}+a_{22}z_{2}|
\]
with the restriction $\Vert z\Vert_{\infty}=1.$ Denoting $z_{j}=x_{j}+iy_{j}$,
$j=1,2$, we have
\begin{align*}
f(z)  &  =\sqrt{\left(  a_{11}x_{1}+a_{21}x_{2}\right)  ^{2}+\left(
a_{11}y_{1}+a_{21}y_{2}\right)  ^{2}}\\
&  +\sqrt{\left(  a_{12}x_{1}+a_{22}x_{2}\right)  ^{2}+\left(  a_{12}%
y_{1}+a_{22}y_{2}\right)  ^{2}}%
\end{align*}

Since $\left\Vert z\right\Vert _{\infty}=1$, we can write $z_{j}=\cos
\theta_{j}+i\sin\theta_{j}$, $j=1,2$. Hence
\[
f(\theta_{1},\theta_{2})=\sqrt{a_{11}^{2}+a_{21}^{2}+2a_{11}a_{21}\cos
(\theta_{1}-\theta_{2})}+\sqrt{a_{12}^{2}+a_{22}^{2}+2a_{12}a_{22}\cos
(\theta_{1}-\theta_{2})}.
\]
By making $t=\theta_{1}-\theta_{2}$ we have
\[
f(t)=\sqrt{a_{11}^{2}+a_{21}^{2}+2a_{11}a_{21}\cos t}+\sqrt{ a_{12}^{2}%
+a_{22}^{2}+2a_{12}a_{22}\cos t}.
\]

\begin{itemize}
\item Proof of (A):

We divide the proof of (A) in two cases:
\end{itemize}

$\circ$ First case. Suppose that $\left(  a_{11},a_{21},a_{12},a_{22}\right)
\in\left(  \mathbb{R}\setminus\{0\}\right)  ^{4}$ and $a_{11}\neq\pm a_{21}$
and $a_{12}\neq\pm a_{22}.$

In this case, since $\left(  a_{11},a_{21},a_{12},a_{22}\right)  \in\left(
\mathbb{R}\setminus\{0\}\right)  ^{4}$ and $a_{11}\neq\pm a_{21}$ and
$a_{12}\neq\pm a_{22}$, $f^{\prime}$ always exists and
\[
f^{\prime}(t)=\frac{-a_{11}a_{21}\sin t}{\sqrt{ a_{11}^{2}+a_{21}^{2}%
+2a_{11}a_{21}\cos t}}+\frac{-a_{12}a_{22}\sin t}{ \sqrt{a_{12}^{2}+a_{22}%
^{2}+2a_{12}a_{22}\cos t}}=0
\]
if and only if $t=k\pi,k\in\mathbb{Z}$ or
\begin{equation}
\frac{-a_{11}a_{21}}{a_{12}a_{22}}=\frac{\sqrt{ a_{11}^{2}+a_{21}^{2}%
+2a_{11}a_{21}\cos t}}{\sqrt{ a_{12}^{2}+a_{22}^{2}+2a_{12}a_{22}\cos t}}.
\label{777}%
\end{equation}
Since $sgn\left(  \frac{a_{11}a_{21}}{a_{12}a_{22}}\right)  =-1$, we have
\begin{equation}
2a_{11}a_{21}\left(  1-\frac{a_{11}a_{21}}{a_{12}a_{22}}\right)  \cos t=\frac{
a_{11}^{2}a_{21}^{2}}{a_{12}^{2}a_{22}^{2}} (a_{12}^{2}+a_{22}^{2}%
)-(a_{11}^{2}+a_{21}^{2}) \label{888}%
\end{equation}

and since
\[
\left\vert \frac{a_{11}^{2}a_{21}^{2}}{a_{12}^{2}a_{22}^{2}}(a_{12}^{2}%
+a_{22}^{2})-(a_{11}^{2}+a_{21}^{2})\right\vert \leq\left\vert 2a_{11}%
a_{21}\left(  1-\frac{a_{11}a_{21}}{a_{12}a_{22}}\right)  \right\vert ,
\]
there is $t_{0}$ such that
\[
\cos t_{0}=\frac{\frac{a_{11}^{2}a_{21}^{2}}{a_{12}^{2}a_{22}^{2}}(a_{12}%
^{2}+a_{22}^{2})-(a_{11}^{2}+a_{21}^{2})}{2a_{11}a_{21}(1-\frac{a_{11}a_{21}%
}{a_{12}a_{22}})}.
\]
Thus
\begin{equation}
\left\Vert T\right\Vert =\max f=\max\left\{
\begin{array}
[c]{c}%
|a_{11}+a_{21}|+|a_{12}+a_{22}|,|a_{11}-a_{21}|+|a_{12}-a_{22}|,\\
\sqrt{a_{11}^{2}+a_{21}^{2}+2a_{11}a_{21}\frac{\frac{a_{11}^{2}a_{21}^{2}%
}{a_{12}^{2}a_{22}^{2}}(a_{12}^{2}+a_{22}^{2})-(a_{11}^{2}+a_{21}^{2}%
)}{2a_{11}a_{21}(1-\frac{a_{11}a_{21}}{a_{12}a_{22}})}}\\
+\sqrt{a_{12}^{2}+a_{22}^{2}+2a_{12}a_{22}\frac{\frac{a_{11}^{2}a_{21}^{2}%
}{a_{12}^{2}a_{22}^{2}}(a_{12}^{2}+a_{22}^{2})-(a_{11}^{2}+a_{21}^{2}%
)}{2a_{11}a_{21}(1-\frac{a_{11}a_{21}}{a_{12}a_{22}})}}%
\end{array}
\right\}  . \label{ji}%
\end{equation}
$\circ$ Second case. Suppose that $\left(  a_{11},a_{21},a_{12},a_{22}\right)
\in\left(  \mathbb{R}\setminus\{0\}\right)  ^{4}$ and $\left(  a_{11}=\pm
a_{21}\text{ or }a_{12}=\pm a_{22}\right)  .$

In this case there are real numbers $t_{0}$ such that $f^{\prime}(t_{0})$ does
not exist. For these values of $t_{0}$ we can see that
\[
f(t_{0})=|a_{11}+a_{21}|+|a_{12}+a_{22}|
\]
or
\[
f(t_{0})=|a_{11}-a_{21}|+|a_{12}-a_{22}|.
\]
For the values of $t$ such that $f^{\prime}(t)$ exists, we proceed as in the
first case; therefore we also obtain (\ref{ji}).

\begin{itemize}
\item \bigskip Proof of (B).
\end{itemize}

We consider three cases:

$\circ$ Case 1. Suppose $\left(  a_{11},a_{21},a_{12},a_{22}\right)
\in\left(  \mathbb{R}\setminus\{0\}\right)  ^{4},$ $a_{11}\neq\pm a_{21}$ and
$a_{12}\neq\pm a_{22}$ with
\[
sgn\left(  \frac{a_{11}a_{21}}{a_{12}a_{22}}\right)  =1.
\]

From (\ref{777}) we can observe that $f^{\prime}(t)=0$ if and only if
$t=k\pi,k\in\mathbb{Z}$ and thus
\[
\left\Vert T\right\Vert =\max f=\max\{|a_{11}+a_{21}|+|a_{12}+a_{22}%
|,|a_{11}-a_{21}|+|a_{12}-a_{22}|\}.
\]

$\circ$ Case 2. Suppose $\left(  a_{11},a_{21},a_{12},a_{22}\right)
\in\left(  \mathbb{R}\setminus\{0\}\right)  ^{4}$, $a_{11}\neq\pm a_{21}$,
$a_{12}\neq\pm a_{22}$,
\[
sgn\left(  \frac{a_{11}a_{21}}{a_{12}a_{22}}\right)  =-1
\]
and%

\[
\left\vert \frac{a_{11}^{2}a_{21}^{2}}{a_{12}^{2}a_{22}^{2}}\left(  a_{12}%
^{2}+a_{22}^{2}\right)  -\left(  a_{11}^{2}+a_{21}^{2}\right)  \right\vert
>\left\vert 2a_{11}a_{21}\left(  1-\frac{a_{11}a_{21}}{a_{12}a_{22}}\right)
\right\vert .
\]
In this case, from (\ref{888}) we also know that $f^{\prime}(t)=0$ if and only
if $t=k\pi,k\in\mathbb{Z}$; therefore
\[
\left\Vert T\right\Vert =\max f=\max\{|a_{11}+a_{21}|+|a_{12}+a_{22}%
|,|a_{11}-a_{21}|+|a_{12}-a_{22}|\}.
\]

$\circ$ Case 3. We may have one of the following situations:

(1) $a_{11}a_{21}=0$ and $a_{12}a_{22}=0$;

(2) $a_{11}a_{21}=0$ and $a_{12}a_{22}\neq0$;

(3) $a_{11}a_{21}\neq0$ and $a_{12}a_{22}=0$;

(4) $a_{11}a_{21}\neq0$ and $a_{12}a_{22}\neq0.$

\bigskip

If we consider (1), $f$ can be written as one of the following expressions:

(a) $f(t) = |a_{11}|+|a_{12}|$;

(b) $f(t) = |a_{11}|+|a_{22}|$;

(c) $f(t) = |a_{21}|+|a_{12}|$;

(d) $f(t) = |a_{21}|+|a_{22}|$.

We thus can write, in any case,
\[
f(t)=|a_{11}+a_{21}|+|a_{12}+a_{22}|=|a_{11}-a_{21}|+|a_{12}-a_{22}|\vspace{
0.2cm}%
\]
and, of course, we obtain the expression of (B).

If we consider (2) there is no loss of generality in supposing $a_{11}=0.$ So,
we get
\[
f(t)=|a_{21}|+\sqrt{a_{12}^{2}+a_{22}^{2}+2a_{12}a_{22}\cos t}%
\]
and we consider two subcases.

$\circ$ Subcase 1. If $a_{12}=a_{22}$ or $a_{12}=-a_{22}$, then there is a
$t_{0}\in\mathbb{R}$ such that $f^{\prime}(t_{0})$ does not exist. In this
case, it is plain that
\[
f(t_{0})=|a_{21}|\leq|a_{11}+a_{21}|+|a_{12}+a_{22}|.
\]
For other values of $t$ we have
\[
f^{\prime}(t)=\frac{-a_{12}a_{22}\sin t}{\sqrt{ a_{12}^{2}+a_{22}^{2}%
+2a_{12}a_{22}\cos t}}%
\]
and thus $f^{\prime}(t_{1})=0$ if and only if $t_{1}=k\pi$ and $k\in
\mathbb{Z}$. For these values of $t_{1}$ we have
\[
f(t_{1})=|a_{21}|+\left\vert a_{12}+a_{22}\right\vert =|a_{11}+a_{21}%
|+\left\vert a_{12}+a_{22}\right\vert
\]
or
\[
f(t_{1})=|a_{21}|+\left\vert a_{12}-a_{22}\right\vert =|a_{11}-a_{21}%
|+\left\vert a_{12}-a_{22}\right\vert .
\]
We thus have again the expression given in (B).

$\circ$ Subcase 2. If $a_{12}\neq a_{22}$ and $a_{12}\neq-a_{22}$ , then
$a_{12}^{2}+a_{22}^{2}+2a_{12}a_{22}\cos t\neq0$ for all $t$ and $f^{\prime
}(t)$ exists for all $t;$ thus we again obtain the expression of (B).

The situation (3) is similar to (2).

If we have (4) and $\left(  a_{11}=\pm a_{21}\text{ or }a_{12}=\pm
a_{22}\right)  $ we proceed as in the second case of (A). If $a_{11}\neq\pm
a_{21}$ and $a_{12}\neq\pm a_{22}$ we are encompassed by Case 1 or Case 2 of (B).
\end{proof}

For real scalars, for the obvious reasons, the expression of the norm is less complicated:

\begin{proposition}
\label{889}Let $T:c_{0}\times c_{0}\rightarrow\mathbb{R}$ be given by
$T(x,y)=\displaystyle\sum_{i,j=1}^{2}a_{ij}x_{i}y_{j},$ with $a_{ij}%
\in\mathbb{R}$. Then%

\[
\Vert T\Vert=\max\{|a_{11}+a_{21}|+|a_{12}+a_{22}|,|a_{11}-a_{21}%
|+|a_{12}-a_{22}|\}.
\]

\end{proposition}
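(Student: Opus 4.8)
The plan is to reduce the computation to maximizing a convex function over the square $[-1,1]^2$ and then exploit the fact that a convex function on a compact convex set attains its maximum at an extreme point. As in the proof of Proposition \ref{normcomp}, I would first write
\[
\Vert T\Vert=\sup\{\Vert T_x\Vert:\Vert x\Vert_{\infty}=1\},\qquad T_x(y)=(a_{11}x_1+a_{21}x_2)y_1+(a_{12}x_1+a_{22}x_2)y_2.
\]
Since $T_x$ is a linear functional on $\ell_{\infty}^{2}(\mathbb{R})$ and the dual norm of $\ell_{\infty}^{2}$ is the $\ell_{1}^{2}$ norm, we have $\Vert T_x\Vert=|a_{11}x_1+a_{21}x_2|+|a_{12}x_1+a_{22}x_2|$, so the task becomes maximizing
\[
g(x_1,x_2)=|a_{11}x_1+a_{21}x_2|+|a_{12}x_1+a_{22}x_2|
\]
subject to $\Vert x\Vert_{\infty}=1$, i.e.\ over the boundary of $Q=[-1,1]^2$.

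The key observation is that $g$ is convex, being a sum of moduli of linear functionals. A convex function on the compact convex set $Q$ attains its maximum at an extreme point of $Q$, and the extreme points of $Q$ are exactly the four corners $(\pm1,\pm1)$. Because these corners all satisfy $\Vert x\Vert_{\infty}=1$, we get $\sup_{\Vert x\Vert_\infty=1}g=\max_{\Vert x\Vert_\infty\le 1}g=\max_{\text{corners}}g$; the first equality holds since the supremum over $Q$ is attained at a corner, which lies on the sphere. Thus it suffices to evaluate $g$ at the four corners.

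It remains to note the symmetry $g(-x)=g(x)$, which collapses the four corner values into two: $g(1,1)=g(-1,-1)=|a_{11}+a_{21}|+|a_{12}+a_{22}|$ and $g(1,-1)=g(-1,1)=|a_{11}-a_{21}|+|a_{12}-a_{22}|$. Taking the larger of these two yields exactly the claimed formula. Since the argument is almost entirely structural, there is no real obstacle here; the only point requiring a line of justification is that the supremum over the sphere $\Vert x\Vert_\infty=1$ equals the supremum over the full ball, which follows immediately from convexity together with the fact that all extreme points of $Q$ sit on its boundary. One could alternatively obtain the same conclusion by specializing Proposition \ref{normcomp} to real arguments (phases $\theta_j\in\{0,\pi\}$, so $\cos t=\pm1$), but the convexity argument is cleaner and self-contained.
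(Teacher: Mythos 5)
Your proposal is correct, and its first half coincides exactly with the paper's: both reduce $\Vert T\Vert$ to $\sup\{\Vert T_x\Vert:\Vert x\Vert_\infty=1\}$ and identify $\Vert T_x\Vert$ with the $\ell_1$-norm expression $|a_{11}x_1+a_{21}x_2|+|a_{12}x_1+a_{22}x_2|$. Where you diverge is in the finishing step. The paper reduces to one variable: it fixes $|x_1|=1$ (treating $x_1=1$ and $x_1=-1$ by symmetry), obtains $f(t)=|a_{11}+a_{21}t|+|a_{12}+a_{22}t|$ on $[-1,1]$, and invokes its Lemma \ref{888000}, an elementary one-dimensional endpoint lemma stated without proof, to conclude that the maximum occurs at $t=\pm1$. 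You instead stay in two variables and apply the extreme-point (Bauer) maximum principle: $g$ is convex as a sum of moduli of linear functionals, hence its maximum over the square $[-1,1]^2$ is attained at one of the four corners, all of which lie on the sphere. The two arguments rest on the same underlying phenomenon (a convex piecewise-linear function maximizes at vertices), but they use different key lemmas. Your route buys the elimination of the WLOG case split on $x_1$ and is self-contained modulo one standard convexity fact — or, even more elementarily, via $g\bigl(\sum_i\lambda_i v_i\bigr)\le\sum_i\lambda_i\,g(v_i)\le\max_i g(v_i)$ for convex combinations of the corners $v_i$. The paper's route buys independence from any general extreme-point machinery, needing only a hands-on one-variable observation, which fits its style of keeping the appendix lemmata elementary. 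Both are complete proofs of the stated formula.
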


\begin{proof}
As in the proof of the complex case,
\[
\Vert T\Vert=\sup\{\Vert T_{x}\Vert:\Vert x\Vert_{\infty}=1\},
\]
where $T_{x}:\ell_{\infty}^{2}(\mathbb{R})\rightarrow\mathbb{R}$ is given by
\[
T_{x}(y)=(a_{11}x_{1}+a_{21}x_{2})y_{1}+(a_{12}x_{1}+a_{22}x_{2})y_{2}.
\]
We thus have
\[
\Vert T\Vert=\sup\{\Vert T_{x}\Vert:\Vert x\Vert_{\infty}=1\}=\sup
\{|a_{11}x_{1}+a_{21}x_{2}|+|a_{12}x_{1}+a_{22}x_{2}|:\Vert x\Vert_{\infty
}=1\}.
\]
So, we shall maximize
\[
f(x)=|a_{11}x_{1}+a_{21}x_{2}|+|a_{12}x_{1}+a_{22}x_{2}|
\]
with the restriction $\Vert x\Vert_{\infty}=1$. There is no loss of generality
in supposing that $|x_{1}|=1$. If $x_{1}=1$ (the case $x_{1}=-1$ is similar),
then
\[
f(t)=|a_{11}+a_{21}t|+|a_{12}+a_{22}t|.
\]
So, we shall maximize $f$ under the restriction $|t|\leq1$. Thus, invoking
Lemma \ref{888000} the maximum of $f$ is attained at $t=-1$ or when $t=1,$ and
it is given by
\[
\max\{|a_{11}+a_{21}|+|a_{12}+a_{22}|,|a_{11}-a_{21}|+|a_{12}-a_{22}|\}.
\]

\end{proof}

\section{Geometry of the unit ball of $\mathcal{L}(^{2}\ell_{\infty}%
^{2}(\mathbb{R}))$: extreme and exposed points\label{real}}

As mentioned in the Introduction, given a Banach space $E$ and a convex set
$A\subset E$, a vector $x\in A$ is an \textit{extreme point} of $A$ if $y,z\in
A$ with $x=\frac{y+z}{2}$ implies $y=z.$ If $x\in A$ and there is a linear
functional $f\in E^{\ast}$ such that $f(x)=1=\left\Vert f\right\Vert $ and
$f(y)<1$ for all $y\in A\setminus\{x\}$, then $x$ is called \textit{exposed
point}. It is not difficult to prove that exposed points are extreme points.
In this section we obtain all extreme and exposed points of the closed unit
ball of $\mathcal{L}(^{2}\ell_{\infty}^{2}(\mathbb{R})).$

\begin{theorem}
The extreme points of the closed unit ball of $\mathcal{L}(^{2}\ell_{\infty
}^{2}(\mathbb{R}))$ are
\begin{align*}
&  \pm x_{1}y_{1},\pm x_{2}y_{1},\pm x_{1}y_{2},\pm x_{2}y_{2},\\
&  \frac{1}{2}\left(  \pm x_{1}y_{1}\pm x_{2}y_{1}\pm x_{1}y_{2}\mp x_{2}%
y_{2}\right)  ,\\
&  \frac{1}{2}\left(  \mp x_{1}y_{1}\pm x_{2}y_{1}\pm x_{1}y_{2}\pm x_{2}%
y_{2}\right)  ,\\
&  \frac{1}{2}\left(  \pm x_{1}y_{1}\mp x_{2}y_{1}\pm x_{1}y_{2}\pm x_{2}%
y_{2}\right)  ,\\
&  \frac{1}{2}\left(  \pm x_{1}y_{1}\pm x_{2}y_{1}\mp x_{1}y_{2}\pm x_{2}%
y_{2}\right)  .
\end{align*}

\end{theorem}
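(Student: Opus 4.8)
The plan is to recognize the unit ball, after a linear change of coordinates, as a product of two simple convex bodies. First I identify each bilinear form $T(x,y)=\sum_{i,j=1}^2 a_{ij}x_iy_j$ with its coefficient vector $(a_{11},a_{21},a_{12},a_{22})\in\mathbb{R}^4$, so that $\mathcal{L}(^2\ell_\infty^2(\mathbb{R}))$ becomes $\mathbb{R}^4$ endowed with the norm from Proposition \ref{889},
\[
\|T\|=\max\{|a_{11}+a_{21}|+|a_{12}+a_{22}|,\ |a_{11}-a_{21}|+|a_{12}-a_{22}|\}.
\]
The key move is to introduce the linear map
\[
\Phi(T)=\big((a_{11}+a_{21},\,a_{12}+a_{22}),\,(a_{11}-a_{21},\,a_{12}-a_{22})\big)\in\mathbb{R}^2\times\mathbb{R}^2,
\]
which is a linear isomorphism of $\mathbb{R}^4$ (its inverse is displayed below). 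The formula above says exactly that $\|T\|$ is the maximum of the two $\ell_1^2$-norms of the components of $\Phi(T)$; hence $\Phi$ is an isometry from $\mathcal{L}(^2\ell_\infty^2(\mathbb{R}))$ onto $\ell_1^2\oplus_\infty\ell_1^2$.

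Next I use that the closed unit ball of $\ell_1^2\oplus_\infty\ell_1^2$ is precisely the Cartesian product $B_{\ell_1^2}\times B_{\ell_1^2}$, since a maximum of two quantities is $\le1$ if and only if both are. For any product of convex sets one has $\operatorname{ext}(A\times B)=\operatorname{ext}(A)\times\operatorname{ext}(B)$, which follows immediately because a midpoint decomposition $(a,b)=\tfrac12((a_1,b_1)+(a_2,b_2))$ decouples into $a=\tfrac12(a_1+a_2)$ and $b=\tfrac12(b_1+b_2)$. The extreme points of the planar diamond $B_{\ell_1^2}$ are its four vertices $(\pm1,0)$ and $(0,\pm1)$, so $B_{\ell_1^2}\times B_{\ell_1^2}$ has exactly the $4\times4=16$ extreme points obtained by pairing two such vertices.

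Finally I transport this back through $\Phi^{-1}$. Being a linear bijection, $\Phi$ preserves midpoints and is injective, so it carries extreme points of the unit ball of its domain bijectively onto those of its codomain; thus the extreme points of $B_{\mathcal{L}(^2\ell_\infty^2(\mathbb{R}))}$ are exactly the $\Phi$-preimages of the sixteen paired vertices. Using $a_{11}=\tfrac12(\alpha+\gamma),\ a_{21}=\tfrac12(\alpha-\gamma),\ a_{12}=\tfrac12(\beta+\delta),\ a_{22}=\tfrac12(\beta-\delta)$, where $(\alpha,\beta)$ and $(\gamma,\delta)$ are the chosen vertices, the pairs with both vertices of the same type (both $(\pm1,0)$ or both $(0,\pm1)$) yield the eight monomials $\pm x_iy_j$, while the eight mixed pairs yield the half-integer forms in the four $\tfrac12(\cdots)$ families of the statement (each family read with a single sign $\epsilon$, so that all its $\pm$ equal $\epsilon$ and its $\mp$ equals $-\epsilon$, giving two forms per family).

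I expect the substantive content to be the two structural facts — that $\Phi$ is an isometry onto $\ell_1^2\oplus_\infty\ell_1^2$ and the product rule for extreme points — both of which are clean. The only delicate point is the last step: inverting $\Phi$ while tracking signs and checking that the sixteen preimages reproduce the stated list with no repetitions, which is pure bookkeeping rather than a genuine obstacle.
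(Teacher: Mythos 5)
Your proof is correct, and it takes a genuinely different --- and considerably shorter --- route than the paper's. The paper argues by hand: a long case analysis on the coefficient pattern of $T$, exhibiting for each non-extreme candidate an explicit decomposition $T=\frac{1}{2}(A+B)$, and verifying extremality of the surviving forms by sign-chasing based on the auxiliary Lemmas \ref{maxpos}, \ref{maxneg} and \ref{maxig}. You instead read Proposition \ref{889} as saying that $\Phi(T)=\bigl((a_{11}+a_{21},a_{12}+a_{22}),(a_{11}-a_{21},a_{12}-a_{22})\bigr)$ is a surjective linear isometry onto $\ell_1^2\oplus_\infty\ell_1^2$, whose unit ball is $B_{\ell_1^2}\times B_{\ell_1^2}$; the product rule for extreme points plus the four vertices of each $\ell_1^2$-ball give exactly sixteen extreme points, and pulling these back through $\Phi^{-1}$ yields precisely the stated list (the bookkeeping checks out: same-type vertex pairs give the eight monomials $\pm x_iy_j$, mixed pairs give the eight half-integer forms). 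Two small points deserve an explicit line in a final write-up: both inclusions of $\operatorname{ext}(A\times B)=\operatorname{ext}(A)\times\operatorname{ext}(B)$ (your decoupling argument gives one; for the other, if $a=\frac{1}{2}(a_1+a_2)$ nontrivially, then $(a,b)=\frac{1}{2}\bigl((a_1,b)+(a_2,b)\bigr)$ nontrivially), and the fact that $\operatorname{ext}(B_{\ell_1^2})$ consists exactly of the four vertices (any other norm-one point is a proper convex combination of two of them). What your method buys is a case-free proof of both directions at once; it would equally streamline the paper's subsequent theorem, since vertices of a polytope are exposed points and surjective linear isometries preserve exposedness. What the paper's hands-on method buys is self-containedness, and its technical lemmas are reused in the Section 4 optimization argument, so they are not wasted effort there.
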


\begin{proof}
For the sake of simplicity we shall denote $\ell_{\infty}^{2}(\mathbb{R})$ by
$\ell_{\infty}^{2}$ along this proof. Let $T\in B_{\mathcal{L}(^{2}%
\ell_{\infty}^{2})}$ be given by $T(x,y)=ax_{1}y_{1}+bx_{2}y_{1}+cx_{1}%
y_{2}+dx_{2}y_{2}$. By symmetry, it suffices to consider the following cases,
with $a,b,c,d\neq0$:

(1) $T(x,y)=ax_{1}y_{1}$;

(2) $T(x,y)=ax_{1}y_{1}+bx_{2}y_{1}$;

(3) $T(x,y)=ax_{1}y_{1}+bx_{2}y_{1}+cx_{1}y_{2}$;

(4) $T(x,y)=ax_{1}y_{1}+bx_{2}y_{1}+cx_{1}y_{2}+dx_{2}y_{2}$.

Since $T\in B_{\mathcal{L}(^{2}\ell_{\infty}^{2})}$, we know that
$|a|,|b|,|c|$ and $|d|$ are not bigger than $1$.

Case (1). If $|a|<1,$ let $0<\varepsilon<1-|a|$. Defining
\[
A(x,y)=(a+\varepsilon)x_{1}y_{1}%
\]%
\[
B(x,y)=(a-\varepsilon)x_{1}y_{1},
\]
we have $\frac{1}{2}(A+B)=T$ and $A,B\in B_{\mathcal{L}(^{2}\ell_{\infty}%
^{2})}$. Thus, $T$ is not an extreme point. If $|a|=1$, we can suppose $a=1$.
Thus, if there are $A,B\in B_{\mathcal{L}(^{2}\ell_{\infty}^{2})}$ such that
$\frac{1}{2}(A+B)=T$, say,%
\[
A(x,y)=\alpha x_{1}y_{1}+\beta x_{2}y_{1}+\gamma x_{1}y_{2}+\delta x_{2}y_{2}%
\]%
\[
B(x,y)=\alpha^{\prime}x_{1}y_{1}+\beta^{\prime}x_{2}y_{1}+\gamma^{\prime}%
x_{1}y_{2}+\delta^{\prime}x_{2}y_{2},
\]
we have $\left(  \alpha,\beta,\gamma,\delta\right)  =\left(  2-\alpha^{\prime
},-\beta^{\prime},-\gamma^{\prime},-\delta^{\prime}\right)  .$ Since
$|\alpha|,|\alpha^{\prime}|\leq1$, we conclude that $\alpha=\alpha^{\prime}%
=1$. Note that if $\beta\neq0$, then $1+\beta$ or $1-\beta$ is bigger than
$1$. Estimating $A((1,1),(1,0))$ and $A((1,1),(-1,0))$ we conclude that $\Vert
A\Vert>1$ and the same happens to $B$; therefore $\beta=0$. The same argument
shows us that $\gamma=\delta=0$. Thus, $T$ is an extreme point.

Case (2). Note that
\[
\Vert T\Vert=|a|+|b|\leq1.
\]
Let $0<\varepsilon<\min\{|a|,|b|\},$ and defining
\[
A(x,y)=(a+sgn(a)\varepsilon)x_{1}y_{1}+(b-sgn(b)\varepsilon)x_{2}y_{1}%
\]%
\[
B(x,y)=(a-sgn(a)\varepsilon)x_{1}y_{1}+(b+sgn(b)\varepsilon)x_{2}y_{1},
\]
we conclude that $\frac{1}{2}(A+B)=T$ and $A,B\in B_{\mathcal{L}(^{2}%
\ell_{\infty}^{2})}$. Thus, $T$ is not an extreme point.

Case (3). By Proposition \ref{889}, we have
\[
\Vert T\Vert=\max\{|a+b|+|c|,|a-b|+|c|\}.
\]
Note that
\[
|a+b|+|c|=|a-b|+|c|\Leftrightarrow a=0\text{ or }b=0.
\]
Let us consider two subcases:

(3A) $ab>0$;

(3B) $ab<0$.

If (3A) happens, then $|a-b|+|c|<1$. Defining $0<\varepsilon<\frac
{1-(|a-b|+|c|)}{2}$ and
\[
A(x,y)=(a+\varepsilon)x_{1}y_{1}+(b-\varepsilon)x_{2}y_{1}+cx_{1}y_{2}%
\]%
\[
B(x,y)=(a-\varepsilon)x_{1}y_{1}+(b+\varepsilon)x_{2}y_{1}+cx_{1}y_{2},
\]
we have $\frac{1}{2}(A+B)=T$ and $A,B\in B_{\mathcal{L}(^{2}\ell_{\infty}%
^{2})}$. Thus $T$ is not extreme point.

If (3B) happens, then $|a+b|+|c|<1$. Defining $0<\varepsilon<\frac
{1-(|a+b|+|c|)}{2}$ and
\[
A(x,y)=(a+\varepsilon)x_{1}y_{1}+(b+\varepsilon)x_{2}y_{1}+cx_{1}y_{2}%
\]%
\[
B(x,y)=(a-\varepsilon)x_{1}y_{1}+(b-\varepsilon)x_{2}y_{1}+cx_{1}y_{2},
\]
we have $\frac{1}{2}(A+B)=T$ and $A,B\in B_{\mathcal{L}(^{2}\ell_{\infty}%
^{2})}$. Thus $T$ is not extreme point.

Case (4). We consider four subcases:

(4A) $ab>0\text{ and }cd>0$;

(4B) $ab<0\text{ and }cd<0$;

(4C) $ab>0\text{ and }cd<0$;

(4D) $ab<0\text{ and }cd>0$.

If (4A) happens, then $|a-b|+|c-d|<1$. Considering $0<\varepsilon
<\frac{1-(|a-b|+|c-d|)}{2}$ and defining
\[
A(x,y)=(a+\varepsilon)x_{1}y_{1}+(b-\varepsilon)x_{2}y_{1}+cx_{1}y_{2}%
+dx_{2}y_{2}%
\]%
\[
B(x,y)=(a-\varepsilon)x_{1}y_{1}+(b+\varepsilon)x_{2}y_{1}+cx_{1}y_{2}%
+dx_{2}y_{2},
\]
we have $\frac{1}{2}(A+B)=T$ and $A,B\in B_{\mathcal{L}(^{2}\ell_{\infty}%
^{2})}$. Thus $T$ is not an extreme point.

If (4B) happens, then $|a+b|+|c+d|<1$. Considering $0<\varepsilon
<\frac{1-(|a+b|+|c+d|)}{2}$ and defining
\[
A(x,y)=(a+\varepsilon)x_{1}y_{1}+(b+\varepsilon)x_{2}y_{1}+cx_{1}y_{2}%
+dx_{2}y_{2}%
\]%
\[
B(x,y)=(a-\varepsilon)x_{1}y_{1}+(b-\varepsilon)x_{2}y_{1}+cx_{1}y_{2}%
+dx_{2}y_{2},
\]
we have $\frac{1}{2}(A+B)=T$ and $A,B\in B_{\mathcal{L}(^{2}\ell_{\infty}%
^{2})}$. Thus $T$ is not an extreme point.

If (4C) happens we can assume $a,b,c>0$ and $d<0$. Note that by Proposition
\ref{889},
\begin{equation}
\Vert T\Vert=|a+b|+|c+d| \label{9pp}%
\end{equation}
or
\begin{equation}
\Vert T\Vert=|a-b|+|c-d|. \label{9ii}%
\end{equation}
We shall consider just (\ref{9pp}) because (\ref{9ii}) is similar. If we have
(\ref{9pp}) then, by Lemma \ref{maxpos}, there are two possibilities:

(4CA) $a\geq-d,\ b\geq-d$ and $c\geq-d;$

(4CB) $a\geq c,\ b\geq c$ and $-d\geq c.$

We shall first prove that if
\[
card\{a,b,c,-d\}\neq1,
\]
then $T$ is not an extreme point. Let us first suppose (4CA).

If $card\{a,b,c,-d\}\neq1$ we can assume $a\neq b$ because the other cases are
analogous. We thus have two possibilities:

(4CAA)\ $a>b,$

(4CAB) $a<b.$

Let us first consider (4CAA):

Since $a>b$, we have $a>b\geq-d$ and $c\geq-d$. We consider two cases:

(4CAAA) $a>b>-d$ e $c\geq-d$;

(4CAAB) $a>b=-d$ e $c\geq-d$.

If (4CAAA) happens, since $a>b>-d>0$ and $c\geq-d>0$ we conclude that
\[
a+b+c+d>a-b+c-d,
\]
i.e.,%
\[
\left\vert a+b\right\vert +\left\vert c+d\right\vert >\left\vert
a-b\right\vert +\left\vert c-d\right\vert
\]
and thus, by Proposition \ref{889}, $|a-b|+|c-d|<1$. Considering $0<\varepsilon
<\frac{1-(|a-b|+|c-d|)}{2}$ e defining
\[
A(x,y)=(a+\varepsilon)x_{1}y_{1}+(b-\varepsilon)x_{2}y_{1}+cx_{1}y_{2}%
+dx_{2}y_{2}%
\]%
\[
B(x,y)=(a-\varepsilon)x_{1}y_{1}+(b+\varepsilon)x_{2}y_{1}+cx_{1}y_{2}%
+dx_{2}y_{2},
\]
we have $\frac{1}{2}(A+B)=T$ and $A,B\in B_{\mathcal{L}(^{2}\ell_{\infty}%
^{2})}$. Thus $T$ is not an extreme point.

If (4CAAB) happens, since $b=-d$ and using that $a>b>0$ and $c\geq-d>0$ we
have%
\[
\left\vert a+b\right\vert +\left\vert c+d\right\vert =\left\vert
a-b\right\vert +\left\vert c-d\right\vert =a+c
\]
and, by then by Proposition \ref{889},
\[
\Vert T\Vert=a+c\leq1.
\]

We have two possibilities:

(4CAABA) $a+c<1;$

(4CAABB) $a+c=1.$

If (4CAABA) happens, we choose $0<\varepsilon<\min\{a-b,1-(a+c)\}$ and define
\[
A(x,y)=(a+\varepsilon)x_{1}y_{1}+bx_{2}y_{1}+cx_{1}y_{2}-bx_{2}y_{2}%
\]%
\[
B(x,y)=(a-\varepsilon)x_{1}y_{1}+bx_{2}y_{1}+cx_{1}y_{2}-bx_{2}y_{2},
\]
and by Lemma \ref{maxig}, we conclude that $A,B\in B_{\mathcal{L}(^{2}%
\ell_{\infty}^{2})}$ and $\frac{1}{2}(A+B)=T$. Hence, $T$ is not an extreme point.

If (4CAABB) happens, we can write
\[
T(x,y)=ax_{1}y_{1}+bx_{2}y_{1}+(1-a)x_{1}y_{2}-bx_{2}y_{2}.
\]
Since $c\geq-d$, it follows that
\[
1-a\geq b.
\]
If $1-a=b$, then
\[
T(x,y)=(1-b)x_{1}y_{1}+bx_{2}y_{1}+bx_{1}y_{2}-bx_{2}y_{2}.
\]
Note that
\[
|\left(  1-b\right)  -b|+|b-(-b)|=|1-2b|+2b.
\]
Since $1-b=a>b>0$, it follows that $0<b<\frac{1}{2}$. Considering
$0<\varepsilon<\min\{b,\frac{a-b}{2}\}$ and defining
\[
A(x,y)=(1-b+\varepsilon)x_{1}y_{1}+(b-\varepsilon)x_{2}y_{1}+(b-\varepsilon
)x_{1}y_{2}+(-b+\varepsilon)x_{2}y_{2}%
\]%
\[
B(x,y)=(1-b-\varepsilon)x_{1}y_{1}+(b+\varepsilon)x_{2}y_{1}+(b+\varepsilon
)x_{1}y_{2}+(-b-\varepsilon)x_{2}y_{2},
\]
we have $\frac{1}{2}(A+B)=T$ and $\Vert A\Vert=\Vert B\Vert=1$. Hence, $T$ is
not an extreme point.

If $1-a>b$, then $b<a<1-b$. Considering $0<\varepsilon<\min\{a-b,1-a-b\}$ and
defining
\[
A(x,y)=(a+\varepsilon)x_{1}y_{1}+bx_{2}y_{1}+(1-a-\varepsilon)x_{1}%
y_{2}-bx_{2}y_{2}%
\]%
\[
B(x,y)=(a-\varepsilon)x_{1}y_{1}+bx_{2}y_{1}+(1-a+\varepsilon)x_{1}%
y_{2}-bx_{2}y_{2},
\]
we conclude that $A,B\in B_{\mathcal{L}(^{2}\ell_{\infty}^{2})}$ and $\frac
{1}{2}(A+B)=T$. Thus $T$ is not a extreme point.

Now let us prove (4CAB). Since $b>a$, then $b>a\geq-d\text{ and }c\geq-d.$

If $b>a>-d$ and $c\geq-d$, then
\[
a>-d\Rightarrow a+d>-a-d\Rightarrow a+b+c+d>b-a+c-d.
\]
Hence
\[
|a+b|+|c+d|>|a-b|+|c-d|.
\]
Considering $0<\varepsilon<\frac{1-(|a-b|+|c-d|)}{2}$ and defining
\[
A(x,y)=(a+\varepsilon)x_{1}y_{1}+(b-\varepsilon)x_{2}y_{1}+cx_{1}y_{2}%
+dx_{2}y_{2}%
\]%
\[
B(x,y)=(a-\varepsilon)x_{1}y_{1}+(b+\varepsilon)x_{2}y_{1}+cx_{1}y_{2}%
+dx_{2}y_{2},
\]
we conclude that $\frac{1}{2}(A+B)=T$ and $A,B\in B_{\mathcal{L}(^{2}%
\ell_{\infty}^{2})}$. Thus $T$ is not an extreme point.

If $b>a=-d$ and $c\geq-d$, then we shall proceed as in the case (4CAAB) to
observe that
\[
T(x,y)=ax_{1}y_{1}+bx_{2}y_{1}+cx_{1}y_{2}-ax_{2}y_{2}%
\]
is not an extreme point.

So, it remains to look for extreme points in the case (4C) when
\[
card\{a,b,c,-d\}=1.
\]
In this case we can write
\begin{equation}
T(x,y)=ax_{1}y_{1}+ax_{2}y_{1}+ax_{1}y_{2}-ax_{2}y_{2}. \label{nbv}%
\end{equation}
Since $2a=\Vert T\Vert\leq1$, we have $a\leq\frac{1}{2}.$ If $a<\frac{1}{2}$,
$T$ is not an extreme point. Let us show that when $a=\frac{1}{2}$ the
bilinear form $T$ given by (\ref{nbv}) is an extreme point.

Suppose that there exist $A,B\in B_{\mathcal{L}(^{2}\ell_{\infty}^{2})}$ such
that $\frac{1}{2}(A+B)=T$. Denoting
\[
A(x,y)=\alpha x_{1}y_{1}+\beta x_{2}y_{1}+\gamma x_{1}y_{2}+\delta x_{2}%
y_{2},
\]%
\[
B(x,y)=\alpha^{\prime}x_{1}y_{1}+\beta^{\prime}x_{2}y_{1}+\gamma^{\prime}%
x_{1}y_{2}+\delta^{\prime}x_{2}y_{2},
\]
we have
\[
\left(  \alpha+\alpha^{\prime},\beta+\beta^{\prime},\gamma+\gamma^{\prime
},\delta+\delta^{\prime}\right)  =\left(  1,1,1,-1\right)  .
\]
Since $|\alpha|,|\alpha^{\prime}|\leq1$, it follows that $\alpha\in
\lbrack0,1]$. A similar argument tells us that $\beta,\gamma,-\delta\in
\lbrack0,1]$. We claim that if $\alpha\neq\frac{1}{2}$, then $A\notin
B_{\mathcal{L}(^{2}\ell_{\infty}^{2}(\mathbb{R}))}$ or $B\notin B_{\mathcal{L}%
(^{2}\ell_{\infty}^{2}(\mathbb{R}))}$. Note that
\begin{align*}
0\leq\alpha<\frac{1}{2}  &  \Rightarrow-\frac{1}{2}<-\alpha\leq0\\
&  \Rightarrow\frac{1}{2}<1-\alpha\leq1\\
&  \Rightarrow\frac{1}{2}<\alpha^{\prime}\leq1
\end{align*}
and
\[
\frac{1}{2}<\alpha\leq1\Rightarrow0\leq\alpha^{\prime}<\frac{1}{2}.
\]
In a similar fashion,%
\[
0\leq\beta<\frac{1}{2}\Rightarrow\frac{1}{2}<\beta^{\prime}\leq1
\]
and so on.

So, let us first suppose $\alpha\in\lbrack{0,\frac{1}{2}})$. We may have
$\beta\in\lbrack0,\frac{1}{2}]$ or $\beta\in(\frac{1}{2},1]$.

If $\beta\in\lbrack0,\frac{1}{2}]$, then $\alpha^{\prime}\in(\frac{1}{2},1]$
and $\beta^{\prime}\in\lbrack\frac{1}{2},1]$. Therefore,
\[
B((1,1),(1,0))=\alpha^{\prime}+\beta^{\prime}>1
\]
and thus $\Vert B\Vert>1$, a contradiction.

If $\beta\in(\frac{1}{2},1]$, then we may have:

(P1) $\gamma\in\lbrack0,\frac{1}{2}]$ and $\delta\in\lbrack-1,-\frac{1}{2}]$;

(P2) $\gamma\in\lbrack0,\frac{1}{2}]$ and $\delta\in(-\frac{1}{2},0]$;

(P3) $\gamma\in(\frac{1}{2},1]$ and $\delta\in\lbrack-1,-\frac{1}{2}]$;

(P4) $\gamma\in(\frac{1}{2},1]$ and $\delta\in(-\frac{1}{2},0]$.

If (P1) holds, then $\alpha^{\prime}\in(\frac{1}{2},1]$, $\gamma^{\prime}%
\in\lbrack\frac{1}{2},1]$ and $\beta^{\prime},-\delta^{\prime}\in
\lbrack0,\frac{1}{2})$. Thus $\alpha^{\prime}>\beta^{\prime}$, $\gamma
^{\prime}\geq\beta^{\prime}$ and $\alpha^{\prime},\gamma^{\prime}\geq
-\delta^{\prime}$. When $\beta^{\prime}\geq-\delta^{\prime}$, by Lemma
\ref{maxpos}, we have
\[
\Vert B\Vert=\alpha^{\prime}+\beta^{\prime}+\gamma^{\prime}+\delta^{\prime
}>1.
\]
When $-\delta^{\prime}\geq\beta^{\prime}$, by Lemma \ref{maxneg}, we have%
\[
\Vert B\Vert=\alpha^{\prime}-\beta^{\prime}+\gamma^{\prime}-\delta^{\prime
}>1.
\]

If (P2) holds, then $\alpha^{\prime},-\delta^{\prime}\in(\frac{1}{2},1]$,
$\gamma^{\prime}\in\lbrack\frac{1}{2},1]$ and $\beta^{\prime}\in\lbrack
0,\frac{1}{2})$. Thus $\alpha^{\prime},-\delta^{\prime}>\beta^{\prime}$ and
$\gamma^{\prime}\geq\beta^{\prime}$. By Lemma \ref{maxneg}, we have%
\[
\Vert B\Vert=\alpha^{\prime}-\beta^{\prime}+\gamma^{\prime}-\delta^{\prime
}>1.
\]

If (P3) holds, then
\[
A((1,-1),(0,1))=-\delta+\gamma>1
\]
and thus $\Vert A\Vert>1$.

If we have (P4), then $\beta,\gamma\geq\alpha$ and $\beta,\gamma\geq-\delta$.
When $\alpha\geq-\delta$, by Lemma \ref{maxpos}, we have%
\[
\Vert A\Vert=\alpha+\beta+\gamma+\delta>1.
\]
When $-\delta\geq\alpha$, by Lemma \ref{maxneg}, we have
\[
\Vert A\Vert=\beta-\alpha+\gamma-\delta>1.
\]

Now, let us suppose $\alpha\in(\frac{1}{2},1]$. We may have $\beta\in
\lbrack0,\frac{1}{2}]$ or $\beta\in\lbrack\frac{1}{2},1]$. If $\beta\in
\lbrack\frac{1}{2},1]$, then%
\[
A((1,1),(1,0))=\alpha+\beta>1
\]
and hence$\Vert A\Vert>1$, a contradiction. If $\beta\in\lbrack0,\frac{1}{2}%
]$, we may have:

(K1) $\gamma\in\lbrack0,\frac{1}{2}]$ and $\delta\in\lbrack-1,-\frac{1}{2}]$;

(K2) $\gamma\in\lbrack0,\frac{1}{2}]$ and $\delta\in(-\frac{1}{2},0]$;

(K3) $\gamma\in(\frac{1}{2},1]$ and $\delta\in\lbrack-1,-\frac{1}{2}]$;

(K4) $\gamma\in(\frac{1}{2},1]$ and $\delta\in(-\frac{1}{2},0]$.

If (K1) happens, then $\alpha>\beta$, $-\delta\geq\beta$ and $\alpha
,-\delta\geq\gamma$. When $\beta\geq\gamma$, by Lemma \ref{maxpos}, we have
\[
\Vert A\Vert=\alpha+\beta-\delta-\gamma>1.
\]
When $\gamma\geq\beta$, by Lemma \ref{maxneg}, we have
\[
\Vert A\Vert=\alpha-\beta+\gamma-\delta>1.
\]

If (K2) occurs, then $\gamma^{\prime}\in\lbrack\frac{1}{2},1]$ e
$-\delta^{\prime}\in(\frac{1}{2},1]$. Therefore,
\[
B((1,-1),(0,1))=-\delta^{\prime}+\gamma^{\prime}>1
\]
and $\Vert B\Vert>1$.

If we have (K3), then
\[
A((1,-1),(0,1))=-\delta+\gamma>1,
\]
and so $\Vert A\Vert>1.$

If (K4) happens, then $\alpha,\gamma\geq\beta$ and $\alpha,\gamma\geq-\delta$.
When $\beta\geq-\delta$, by Lemma \ref{maxpos}, we have
\[
\Vert A\Vert=\alpha+\beta+\gamma+\delta>1.
\]
When $-\delta\geq\beta$, by Lemma \ref{maxneg}, we have
\[
\Vert A\Vert=\alpha-\beta+\gamma-\delta>1.
\]
The case (4CB) is analogous to (4CA) and (4D) is similar to (4C).
\end{proof}

\begin{theorem}
The extreme and exposed points of $B_{\mathcal{L}(^{2}\ell_{\infty}%
^{2}(\mathbb{R}))}$ are the same.
\end{theorem}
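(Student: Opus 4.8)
The plan is to exploit the fact, already noted in the text, that every exposed point is extreme; it therefore remains only to prove the converse, namely that each of the sixteen extreme points listed in the previous theorem is exposed. The cleanest route I see is to make the polytope structure of $B_{\mathcal{L}(^{2}\ell_{\infty}^{2}(\mathbb{R}))}$ explicit. Using Proposition \ref{889}, define the linear map $\Phi$ sending $T(x,y)=\sum_{i,j=1}^{2}a_{ij}x_{i}y_{j}$ to the pair $\big((a_{11}+a_{21},\,a_{12}+a_{22}),\,(a_{11}-a_{21},\,a_{12}-a_{22})\big)$. Since $a_{11}=\tfrac12\big((a_{11}+a_{21})+(a_{11}-a_{21})\big)$ and similarly for the other coefficients, $\Phi$ is a linear bijection onto $\mathbb{R}^{2}\times\mathbb{R}^{2}$, and Proposition \ref{889} states exactly that $\Vert T\Vert=\max\{\Vert\Phi_{1}(T)\Vert_{1},\Vert\Phi_{2}(T)\Vert_{1}\}$. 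Thus $\Phi$ is an isometric isomorphism of $\mathcal{L}(^{2}\ell_{\infty}^{2}(\mathbb{R}))$ onto $\ell_{1}^{2}\oplus_{\infty}\ell_{1}^{2}$, and it carries the unit ball onto the product $B_{\ell_{1}^{2}}\times B_{\ell_{1}^{2}}$ of two diamonds.

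First I would record that the extreme points of the product are precisely the products of extreme points of the factors: as $B_{\ell_{1}^{2}}$ has the four vertices $(\pm1,0),(0,\pm1)$, the product has $4\times 4=16$ vertices, which upon applying $\Phi^{-1}$ are exactly the sixteen bilinear forms in the statement (the eight forms $\pm x_{i}y_{j}$ coming from the axis--axis combinations, together with the eight forms of the second type). The key step is then a product principle for exposing functionals on an $\ell_{\infty}$-sum $X\oplus_{\infty}Y$: if $\phi$ exposes $p$ in $B_{X}$ and $\psi$ exposes $q$ in $B_{Y}$, then $F=\tfrac12(\phi,\psi)$ satisfies $\Vert F\Vert=\tfrac12(\Vert\phi\Vert+\Vert\psi\Vert)=1$ (since the dual of $X\oplus_{\infty}Y$ is $X^{*}\oplus_{1}Y^{*}$), $F(p,q)=1$, and for every $(x,y)$ in the product ball $F(x,y)=\tfrac12(\phi(x)+\psi(y))\le 1$ with equality forcing $\phi(x)=\psi(y)=1$, i.e. $x=p$ and $y=q$; hence $F$ exposes $(p,q)$. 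Because each vertex of the polygon $B_{\ell_{1}^{2}}$ is visibly exposed (for instance $(1,0)$ is exposed by the first-coordinate functional), this exposes all sixteen product vertices at once, and pulling each functional back through the isometry $\Phi$ yields an exposing functional for the corresponding extreme point of $B_{\mathcal{L}(^{2}\ell_{\infty}^{2}(\mathbb{R}))}$.

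The main obstacle I anticipate is not the existence of a supporting functional but the strictness $f(S)<1$ for $S\neq T$, that is, the \emph{uniqueness} of the maximizer; this is exactly what the product structure delivers transparently, since strictness in each diamond factor (each vertex of a convex polygon is a zero-dimensional exposed face) combines through the $\ell_{\infty}$-sum into strictness for the product vertex. I would also remark that, translated back into the original coordinates, the resulting exposing functional for an extreme point $T_{0}$ is simply the pairing of the coefficient matrix of $T$ against that of $T_{0}$. For the rank-one points this is the evaluation $T\mapsto\pm T(e_{i},e_{j})$, and an elementary direct verification, using the two constraints $|a_{11}\pm a_{21}|+|a_{12}\pm a_{22}|\le 1$ separately, confirms that $\pm T(e_{i},e_{j})=1$ forces all remaining coefficients to vanish; this gives a self-contained alternative to the abstract product principle, and the analogous computation settles the eight balanced points as well.
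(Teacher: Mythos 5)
Your proof is correct, and it takes a genuinely different route from the paper's. The paper argues functional by functional: for each extreme point it writes down the coefficient-pairing functional explicitly (for instance $f(x_{1}y_{1})=1$, $f(x_{r}y_{s})=0$ otherwise for the point $x_{1}y_{1}$, and the functional with values $\tfrac12,\tfrac12,\tfrac12,-\tfrac12$ for the balanced point), and then verifies $\Vert f\Vert=1$ and the strictness $f(S)<1$ for $S\neq T_{0}$ by direct estimates from Proposition \ref{889}. You instead promote Proposition \ref{889} to a structural statement: the map $\Phi(T)=\bigl((a_{11}+a_{21},a_{12}+a_{22}),(a_{11}-a_{21},a_{12}-a_{22})\bigr)$ is an isometric isomorphism onto $\ell_{1}^{2}\oplus_{\infty}\ell_{1}^{2}$, so the ball is a product of two diamonds, and then you invoke two soft facts --- vertices of $B_{\ell_{1}^{2}}$ are exposed, and exposing functionals on the factors combine through the $\oplus_{1}$ description of the dual into an exposing functional $F=\tfrac12(\phi,\psi)$ on the product. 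Your verification of this product principle is sound (norm one, value one at $(p,q)$, and strictness because equality forces $\phi(x)=\psi(y)=1$), and, as your last paragraph correctly observes, pulling $F$ back through $\Phi$ reproduces exactly the paper's functionals, so the two proofs end at the same place. What your route buys: all sixteen points are handled uniformly with no case analysis, and as a bonus the identity $\mathrm{ext}(A\times B)=\mathrm{ext}(A)\times\mathrm{ext}(B)$ re-derives, in a few lines, the classification of extreme points that the paper obtained in the preceding theorem through a long case study --- so your identification explains the geometry rather than just confirming it; note this also makes your proof independent of that theorem. What the paper's route buys: it is self-contained and elementary, needing nothing beyond Proposition \ref{889} --- no duality of $\ell_{\infty}$-sums and no general facts about products of convex bodies.
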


\begin{proof}
It suffices to prove that $extB_{\mathcal{L}(^{2}\ell_{\infty}^{2}%
(\mathbb{R}))}\subseteq expB_{\mathcal{L}(^{2}\ell_{\infty}^{2}(\mathbb{R}))}$.

Let us prove that $x_{1}y_{1}$ is an exposed point. Define a linear form
$f:\mathcal{L}(^{2}\ell_{\infty}^{2}(\mathbb{R}))\rightarrow\mathbb{R}$ by
$f(x_{1}y_{1})=1$ and $f(x_{r}y_{s})=0$, for $\left(  r,s\right)  \neq\left(
1,1\right)  $. Thus $\Vert f\Vert=1=f\left(  x_{1}y_{1}\right)  $ and
\[
f(T)<1
\]
for all $T=ax_{1}y_{1}+bx_{2}y_{1}+cx_{1}y_{2}+dx_{2}y_{2}$ in $B_{\mathcal{L}%
(^{2}\ell_{\infty}^{2}(\mathbb{R}))}\backslash\{x_{1}y_{1}\}.$ In fact, note
that $a\leq1,$ otherwise $\left\Vert T\right\Vert >1$. If $a=1$, then $b=0$,
because
\[
T((1,1),(1,0))=a+b\text{ and }T((1,-1),(1,0))=a-b.
\]
The same argument shows that $c=d=0.$ Thus $a<1$ and $f(T)=a<1.$ A similar
argument shows that $\pm x_{1}y_{1},\pm x_{2}y_{1},\pm x_{1}y_{2},\pm
x_{2}y_{2}$ are exposed points.

Now, let us prove that $T(x,y)=\frac{1}{2}(x_{1}y_{1}+x_{2}y_{1}+x_{1}%
y_{2}-x_{2}y_{2})$ is an exposed point. Define a linear form such that
$f(x_{1}y_{1})=\frac{1}{2}$, $f(x_{2}y_{1})=\frac{1}{2}$, $f(x_{1}y_{2}%
)=\frac{1}{2}$ and $f(x_{2}y_{2})=-\frac{1}{2}$. Thus $f(T)=1$. Note that
$\Vert f\Vert=1$. In fact, by Proposition \ref{889}, we have%

\[
\Vert L\Vert=\max\left\{  |a+b|+|c+d|,|a-b|+|c-d|\right\}  ,
\]
for all $L(x,y)=ax_{1}y_{1}+bx_{2}y_{1}+cx_{1}y_{2}+dx_{2}y_{2}$. Therefore,
if $L\in B_{\mathcal{L}(^{2}\ell_{\infty}^{2}(\mathbb{R}))}$, then
\begin{equation}
|a+b|\leq1\text{ and }|c-d|\leq1 \label{nhy}%
\end{equation}
and%
\begin{align*}
\Vert f\Vert &  =\sup\left\{  |f(L)|:L\in B_{\mathcal{L}(^{2}\ell_{\infty}%
^{2}(\mathbb{R}))}\right\} \\
&  =\sup\left\{  \left\vert \frac{a+b+c-d}{2}\right\vert :ax_{1}y_{1}%
+bx_{2}y_{1}+cx_{1}y_{2}+dx_{2}y_{2}\in B_{\mathcal{L}(^{2}\ell_{\infty}%
^{2}(\mathbb{R}))}\right\} \\
&  \leq\frac{1}{2}\sup\left\{  |a+b|+|c-d|:ax_{1}y_{1}+bx_{2}y_{1}+cx_{1}%
y_{2}+dx_{2}y_{2}\in B_{\mathcal{L}(^{2}\ell_{\infty}^{2}(\mathbb{R}%
))}\right\} \\
&  \leq\frac{1}{2}(1+1)=1.
\end{align*}
Moreover, if $f(ax_{1}y_{1}+bx_{2}y_{1}+cx_{1}y_{2}+dx_{2}y_{2})=1$ for a
certain bilinear form $ax_{1}y_{1}+bx_{2}y_{1}+cx_{1}y_{2}+dx_{2}y_{2}\in
B_{\mathcal{L}(^{2}\ell_{\infty}^{2}(\mathbb{R}))}$, then
\[
\frac{a+b+c-d}{2}=1
\]
and thus%
\begin{equation}
\frac{\left\vert a+b\right\vert +\left\vert c-d\right\vert }{2}\geq1.
\label{711}%
\end{equation}
By (\ref{nhy}) and (\ref{711}) we conclude that $|a+b|=1$ and $|c-d|=1$. Thus,
by Proposition \ref{889}, we have $|a-b|=0$ and $|c+d|=0$ and since
$\frac{a+b+c-d}{2}=1$ we conclude that $a=b=c=-d=\frac{1}{2}$

A similar argument shows that the other extreme points are also exposed points.
\end{proof}

\section{Littlewood's $4/3$ inequality and an optimization problem: real case}

Extreme points are important for optimization of convex continuous functions
for a very simple reason: first we shall recall a theorem due to
Minkowski/Krein-Milman which asserts that if $E$ is a locally convex space and
$K$ is a nonempty convex and compact subset of $E,$ then $K$ has at least an
extreme point and $K=\overline{conv}(extK)$, where $extK$ is the set of all
extreme points of $K$. If $f:K\rightarrow\mathbb{R}$ is a convex continuous
function its maximum is attained in an extreme point $k_{0}\in K$. In fact,
suppose that $k_{0}\in K$ is a point where the maximum is attained; the
Minkowski/Krein-Milman asserts that there are $\lambda_{1},...,\lambda_{n}%
\in\lbrack0,1]$ such that
\[
k_{0}=%
{\textstyle\sum\limits_{j=1}^{n}}
\lambda_{j}k_{j},
\]
with $k_{1},...,k_{n}\in extK$ and $%
{\textstyle\sum\limits_{j=1}^{n}}
\lambda_{j}=1.$ If the maximum of $f$ is not attained in any extreme point,
then%
\[
f\left(  k_{0}\right)  \leq%
{\textstyle\sum\limits_{j=1}^{n}}
\lambda_{j}f\left(  k_{j}\right)  <%
{\textstyle\sum\limits_{j=1}^{n}}
\lambda_{j}f\left(  k_{0}\right)  =f\left(  k_{0}\right)  ,
\]
a contradiction. However, it is plain that the maximum may also be attained in
non-extreme points. For instance, $f:B_{\ell_{\infty}(\mathbb{R})}%
\rightarrow\mathbb{R}$ given by $f(x)=\left\Vert x\right\Vert $ attains its
maximum in all points of the unit sphere of $\ell_{\infty}(\mathbb{R})$ but
--for instance-- the canonical vectors $e_{j}$ are not extreme points.

For $\mathbb{K}=\mathbb{R}$ or $\mathbb{C}$, Littlewood's $4/3$ inequality
asserts that there exists a sequence of positive scalars $B_{2}^{\mathbb{K}}$
in $[1,\infty)$ such that%
\[
\left(  \sum\limits_{i,j=1}^{\infty}\left\vert U(e_{i},e_{j})\right\vert
^{\frac{4}{3}}\right)  ^{\frac{3}{4}}\leq B_{2}^{\mathbb{K}}\left\Vert
U\right\Vert
\]
for all continuous $2$-linear forms $U:c_{0}\times c_{0}\rightarrow\mathbb{K}%
$. For $\mathbb{K}=\mathbb{R}$ the optimal constant is $\sqrt{2}$ \cite{diniz}
and for complex scalars all that is known is that
\[
1\leq B_{2}^{\mathbb{C}}\leq\frac{2}{\sqrt{\pi}}.
\]
Littlewood's $4/3$ inequality is a forerunner of the classical
Bohnenblust--Hille inequality. The Bohnenblust--Hille inequality for
$m$-linear forms (\cite{bh}) tells us that there exists a sequence of positive
scalars $\left(  B_{m}^{\mathbb{K}}\right)  _{m=1}^{\infty}$ in $[1,\infty)$
such that%
\[
\left(  \sum\limits_{i_{1},\ldots,i_{m}=1}^{\infty}\left\vert U(e_{i_{^{1}}%
},\ldots,e_{i_{m}})\right\vert ^{\frac{2m}{m+1}}\right)  ^{\frac{m+1}{2m}}\leq
B_{m}^{\mathbb{K}}\left\Vert U\right\Vert
\]
for all continuous $m$-linear forms $U:c_{0}\times\cdots\times c_{0}%
\rightarrow\mathbb{K}$. The investigation of the optimal constants in the
Bohnenblust--Hille inequality can be written as the following optimization
problem:
\[
B_{\mathbb{K},m}:=\inf\left\{  \left(  \sum_{j_{1},\cdots,j_{m}=1}^{\infty
}\left\vert T(e_{j_{1}},\cdots,e_{j_{m}})\right\vert ^{\frac{2m}{m+1}}\right)
^{\frac{m+1}{2m}}\!\!\!\!\!,\text{ among all }m\text{--linear forms }T,\text{
with }\Vert T\Vert=1\right\}  .
\]
This optimization problem is a rather challenging, still surrounded by many
mysteries. For recent developments related to the search of optimal constants
we refer to \cite{bohr, pt} and references therein.

We shall call \textit{optimal bilinear} form any $T$ satisfying the
optimization problem above.

By \cite{diniz} we know that the constant $\sqrt{2}$ is sharp for real scalars
when $m=2$, and it is attained when considering the bilinear form
\begin{equation}
T(x,y)=x_{1}y_{1}+x_{1}y_{2}+x_{2}y_{1}-x_{2}y_{2}. \label{pmmm}%
\end{equation}
The next theorem shows that when considering bilinear forms $T:c_{0}\times
c_{0}\rightarrow\mathbb{R}$ of the form $T(x,y)=\displaystyle\sum_{i,j=1}%
^{2}a_{ij}x_{i}y_{j}$ all extremal bilinear forms are very close to
(\ref{pmmm}). It also shows that there are no (norm one) optimal bilinear
forms outside the set of extreme points of the closed unit ball of
$\mathcal{L}(^{2}\ell_{\infty}^{2}(\mathbb{R})).$

\begin{theorem}
Let $T:\ell_{\infty}^{2}(\mathbb{R})\times\ell_{\infty}^{2}(\mathbb{R}%
)\rightarrow\mathbb{R}$ be given by $T(x,y)=\displaystyle\sum_{i,j=1}%
^{2}a_{ij}x_{i}y_{j},$ with $a_{ij}\in\mathbb{R}$. Then the bilinear forms
satisfying
\[
\left(  \sum\limits_{j,k=1}^{2}\left\vert T(e_{j},e_{k})\right\vert ^{\frac
{4}{3}}\right)  ^{\frac{3}{4}}=\sqrt{2}\left\Vert T\right\Vert
\]
are given by
\[
T(x,y)=\alpha x_{1}y_{1}+\alpha x_{1}y_{2}+\alpha x_{2}y_{1}-\alpha x_{2}y_{2}%
\]
or
\[
T(x,y)=\alpha x_{1}y_{1}+\alpha x_{1}y_{2}-\alpha x_{2}y_{1}+\alpha x_{2}y_{2}%
\]
or
\[
T(x,y)=\alpha x_{1}y_{1}-\alpha x_{1}y_{2}+\alpha x_{2}y_{1}+\alpha x_{2}y_{2}%
\]
or
\[
T(x,y)=-\alpha x_{1}y_{1}+\alpha x_{1}y_{2}+\alpha x_{2}y_{1}+\alpha
x_{2}y_{2}%
\]
for all $\alpha\in\mathbb{R}\backslash\{0\}$.
\end{theorem}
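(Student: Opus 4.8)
The plan is to observe first that $T(e_j,e_k)=a_{jk}$, so the quantity on the left is simply $\left(\sum_{i,j}|a_{ij}|^{4/3}\right)^{3/4}$, the $\ell_{4/3}$-norm of the coefficient vector $(a_{11},a_{21},a_{12},a_{22})$. Thus the statement is a characterization of the equality case in $\left(\sum_{i,j}|a_{ij}|^{4/3}\right)^{3/4}\le\sqrt{2}\,\|T\|$, where by Proposition \ref{889} we have $\|T\|=\max\{P,Q\}$ with $P=|a_{11}+a_{21}|+|a_{12}+a_{22}|$ and $Q=|a_{11}-a_{21}|+|a_{12}-a_{22}|$. Since both sides are homogeneous of degree one in $T$, I may reason up to a positive scalar. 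I would prove the inequality itself by an explicit chain of three elementary estimates whose equality conditions are transparent, and then simply intersect those conditions.

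The three ingredients I would isolate as elementary lemmas are: (i) a scalar Clarkson-type inequality $|u+v|^{4/3}+|u-v|^{4/3}\le 2\left(|u|^{4/3}+|v|^{4/3}\right)$, valid because $4/3\in(1,2)$, with equality if and only if $uv=0$; (ii) superadditivity of $t\mapsto t^{4/3}$ on $[0,\infty)$, namely $m^{4/3}+n^{4/3}\le (m+n)^{4/3}$ for $m,n\ge 0$, with equality if and only if $mn=0$; and (iii) the trivial bound $P^{4/3}+Q^{4/3}\le 2\max\{P,Q\}^{4/3}$, with equality if and only if $P=Q$. Ingredient (i) is the only one requiring a short verification (for instance by reducing to one variable and checking that the maximum of $|u+v|^{4/3}+|u-v|^{4/3}$ on $\{|u|^{4/3}+|v|^{4/3}=1\}$ is attained exactly on the coordinate axes); the other two are immediate.

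The chain then runs as follows. Applying (i) with $u=a_{11}+a_{21}$ and $v=a_{11}-a_{21}$ (so that $u\pm v$ equals $2a_{11}$ and $2a_{21}$) gives $|a_{11}|^{4/3}+|a_{21}|^{4/3}\le 2^{-1/3}\left(|a_{11}+a_{21}|^{4/3}+|a_{11}-a_{21}|^{4/3}\right)$, and likewise for the pair $(a_{12},a_{22})$; summing,
\[
\sum_{i,j}|a_{ij}|^{4/3}\le 2^{-1/3}\left(|a_{11}+a_{21}|^{4/3}+|a_{11}-a_{21}|^{4/3}+|a_{12}+a_{22}|^{4/3}+|a_{12}-a_{22}|^{4/3}\right).
\]
Regrouping the right-hand side into the ``$+$'' pair and the ``$-$'' pair and applying (ii) bounds these by $P^{4/3}$ and $Q^{4/3}$ respectively; (iii) then yields $\sum_{i,j}|a_{ij}|^{4/3}\le 2^{-1/3}\cdot 2\,\|T\|^{4/3}=2^{2/3}\|T\|^{4/3}$, which is exactly $\left(\sum_{i,j}|a_{ij}|^{4/3}\right)^{3/4}\le\sqrt{2}\,\|T\|$.

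Equality in the statement forces equality at \emph{every} step, which I would record as the simultaneous conditions $|a_{11}|=|a_{21}|$ and $|a_{12}|=|a_{22}|$ (from (i)); $(a_{11}+a_{21})(a_{12}+a_{22})=0$ and $(a_{11}-a_{21})(a_{12}-a_{22})=0$ (from (ii)); and $P=Q$ (from (iii)). The endgame is a finite sign bookkeeping: writing $a_{21}=\sigma a_{11}$ and $a_{22}=\tau a_{12}$ with $\sigma,\tau\in\{\pm1\}$ (the degenerate case $a_{11}=0$ or $a_{12}=0$ being quickly seen to force $T=0$), the two product conditions become ``$\sigma=-1$ or $\tau=-1$'' and ``$\sigma=+1$ or $\tau=+1$'', which eliminate $(\sigma,\tau)=(+1,+1)$ and $(-1,-1)$ and leave exactly $(\sigma,\tau)\in\{(+1,-1),(-1,+1)\}$. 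In each of these two cases $P=Q$ forces $|a_{11}|=|a_{12}|$, so all four coefficients share a common modulus $|\alpha|$, and resolving the two remaining free signs reproduces precisely the four families listed. I expect the main obstacle to be the careful equality-tracking through the chain (in particular pinning down the equality case of (i)) together with the bookkeeping that confirms these conditions collapse to exactly the four stated forms, with no spurious or missing sign pattern.
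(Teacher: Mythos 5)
Your proposal is correct, and it takes a genuinely different route from the paper. Both arguments begin from Proposition \ref{889}, writing $\Vert T\Vert=\max\{P,Q\}$ with $P=|a_{11}+a_{21}|+|a_{12}+a_{22}|$ and $Q=|a_{11}-a_{21}|+|a_{12}-a_{22}|$, but from there the paper argues by an exhaustive sign analysis: when $a_{11}a_{21}$ and $a_{12}a_{22}$ have compatible signs it identifies $\Vert T\Vert$ with the $\ell_1$-norm of the coefficients (so the ratio is at most $1$), and in the remaining mixed-sign case it maximizes functions of the form $\Vert(x,y,z,w)\Vert_{4/3}/(x\pm y\pm z\pm w)$ by computing gradients, ruling out interior critical points, and descending through boundary strata determined by Lemmas \ref{maxpos} and \ref{maxneg} until only $x=y=z=-w$ survives. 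Your chain — the two-point Clarkson inequality $|u+v|^{4/3}+|u-v|^{4/3}\le 2\left(|u|^{4/3}+|v|^{4/3}\right)$ applied with $u=a_{11}+a_{21}$, $v=a_{11}-a_{21}$ (and likewise for the second column), then superadditivity of $t\mapsto t^{4/3}$, then $P^{4/3}+Q^{4/3}\le 2\max\{P,Q\}^{4/3}$ — replaces all of that calculus with three scalar inequalities whose equality cases are transparent, and I checked both the inequalities and your equality conditions: (i) holds with equality iff $uv=0$ (e.g.\ via subadditivity of $s\mapsto s^{1/3}$, which gives $(1+t)^{1/3}<(1-t)^{1/3}+2t^{1/3}$ for $t>0$ and hence strictness away from the axes), and your sign bookkeeping with $a_{21}=\sigma a_{11}$, $a_{22}=\tau a_{12}$ does eliminate $(\sigma,\tau)=(\pm1,\pm1)$ and, after imposing $P=Q$, reproduces exactly the four families, with negative $\alpha$ accounting for the three-minus-sign patterns. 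Your approach buys several things: it simultaneously re-proves the upper bound $\sqrt{2}$ for $2\times 2$ forms rather than assuming it case by case; it dispatches uniformly the degenerate situation where some $a_{ij}=0$ (which the paper waves off as ``not difficult''), showing it forces $T=0$ — the one trivial solution of the equality that both proofs tacitly exclude; and the equality tracking is conceptual rather than computational. What the paper's method buys, by contrast, is consistency with the machinery (Lemmas \ref{maxpos}, \ref{maxneg} and the norm-formula case analysis) already developed and reused elsewhere in the paper, at the cost of a substantially longer argument.
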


\begin{proof}
If $a_{ij}=0$ for some $i,j\in\{1,2\}$ it is not difficult to prove that the
constant $\sqrt{2}$ is not achieved. Let $T:\ell_{\infty}^{2}(\mathbb{R}%
)\times\ell_{\infty}^{2}(\mathbb{R})\rightarrow\mathbb{R}$ be given by
$T(x,y)=\displaystyle\sum_{i,j=1}^{2}a_{ij}x_{i}y_{j},$ with $a_{ij}%
\in\mathbb{R}\setminus\{0\}$. Let us first suppose that
\[
a_{11}\cdot a_{21}>0\text{ and }a_{12}\cdot a_{22}>0.
\]
In this case, by Proposition \ref{889} we have
\[
\Vert T\Vert=|a_{11}+a_{21}|+|a_{12}+a_{22}|=\Vert(a_{11},a_{21},a_{12}%
,a_{22})\Vert_{1}.
\]
Therefore,
\[
\frac{\Vert(a_{11},a_{21},a_{12},a_{22})\Vert_{\frac{4}{3}}}{\Vert T\Vert
}=\frac{\Vert(a_{11},a_{21},a_{12},a_{22})\Vert_{\frac{4}{3}}}{\Vert
(a_{11},a_{21},a_{12},a_{22})\Vert_{1}}\leq1.
\]
Now, suppose
\[
a_{11},a_{12}>0\text{ and }a_{21},a_{22}<0
\]
Again, using Proposition \ref{889}, we conclude that
\[
\Vert T\Vert=|a_{11}-a_{21}|+|a_{12}-a_{22}|=\Vert(a_{11},a_{21},a_{12}%
,a_{22})\Vert_{1}.
\]
Thus
\[
\frac{\Vert(a_{11},a_{21},a_{12},a_{22})\Vert_{\frac{4}{3}}}{\Vert T\Vert
}=\frac{\Vert(a_{11},a_{21},a_{12},a_{22})\Vert_{\frac{4}{3}}}{\Vert
(a_{11},a_{21},a_{12},a_{22})\Vert_{1}}\leq1.
\]
The cases
\[
a_{11},a_{12}<0\text{ and }a_{21},a_{22}>0,
\]%
\[
a_{11},a_{22}<0\text{ and }a_{12},a_{21}>0,
\]
and
\[
a_{11},a_{22}>0\text{ and }a_{12},a_{21}<0
\]
are similar.

By symmetry, the remaining cases can be summarized in the case
\[
a_{11},a_{12},a_{21}>0\text{ and }a_{22}<0.
\]
By Proposition \ref{889} we know that
\[
\Vert T\Vert=\max\{a_{11}+a_{21}+|a_{12}+a_{22}|,|a_{11}-a_{21}|+a_{12}%
-a_{22}\}
\]
and thus
\begin{equation}
\Vert T\Vert=a_{11}+a_{21}+|a_{12}+a_{22}| \label{76}%
\end{equation}
or
\begin{equation}
\Vert T\Vert=|a_{11}-a_{21}|+a_{12}-a_{22}. \label{00}%
\end{equation}
If (\ref{76}) occurs, we may have
\[
(\alpha)\ \Vert T\Vert=a_{11}+a_{21}+a_{12}+a_{22}%
\]
or
\[
(\beta)\ \Vert T\Vert=a_{11}+a_{21}-a_{12}-a_{22}.
\]
If we have ($\alpha$), note that
\[
a_{12}\geq-a_{22}%
\]
and define
\[
f(x,y,z,w)=\frac{\Vert(x,y,z,w)\Vert_{\frac{4}{3}}}{x+y+z+w}.
\]
Since
\[
f_{x}(x,y,z,w)=\frac{x^{\frac{1}{3}}(x+y+z+w)-\Vert(x,y,z,w)\Vert_{\frac{4}%
{3}}^{\frac{4}{3}}}{\Vert(x,y,z,w)\Vert_{\frac{4}{3}}^{\frac{1}{3}%
}(x+y+z+w)^{2}},
\]%
\[
f_{y}(x,y,z,w)=\frac{y^{\frac{1}{3}}(x+y+z+w)-\Vert(x,y,z,w)\Vert_{\frac{4}%
{3}}^{\frac{4}{3}}}{\Vert(x,y,z,w)\Vert_{\frac{4}{3}}^{\frac{1}{3}%
}(x+y+z+w)^{2}},
\]%
\[
f_{z}(x,y,z,w)=\frac{z^{\frac{1}{3}}(x+y+z+w)-\Vert(x,y,z,w)\Vert_{\frac{4}%
{3}}^{\frac{4}{3}}}{\Vert(x,y,z,w)\Vert_{\frac{4}{3}}^{\frac{1}{3}%
}(x+y+z+w)^{2}},
\]%
\[
f_{w}(x,y,z,w)=\frac{w^{\frac{1}{3}}(x+y+z+w)-\Vert(x,y,z,w)\Vert_{\frac{4}%
{3}}^{\frac{4}{3}}}{\Vert(x,y,z,w)\Vert_{\frac{4}{3}}^{\frac{1}{3}%
}(x+y+z+w)^{2}},
\]
we have
\[
\nabla f=0\Rightarrow x^{\frac{1}{3}}=y^{\frac{1}{3}}=z^{\frac{1}{3}}%
=w^{\frac{1}{3}}\Rightarrow x=y=z=w.
\]
Since $x,y,z>0$ and $w<0$, we conclude that $\nabla f\neq0$.

From now on let $\left(  a,b,c,d)=(a_{11},a_{21},a_{12},a_{22}\right)  .$ By
Lemma \ref{maxpos}
\begin{align*}
&  \left\{  (a,b,c,d):a,b,c,-d>0:\text{ }a+b+c+d\geq|a-b|+c-d\right\}  =\\
\{(a,b,c,d)  &  :a,b,c,-d>0:c\geq-d,b\geq-d,a\geq-d\}\\
\cup\{(a,b,c,d)  &  :a,b,c,-d>0:\text{ }a\geq c,\ b\geq c,\ -d\geq c\}.
\end{align*}
Since in our case $c\geq-d$ we conclude that we shall search the maximum of
$f$ in the set%

\[
H:=\{(a,b,c,d):a,b,c>0,\ d<0\text{ and}\ c\geq-d,b\geq-d,a\geq-d\}.
\]
Defining
\[
A:=\{(a,b,c,d):a,b,c>0,\ d<0\text{ and }a>-d,b>-d,c>-d\},
\]
the maximum of $f$ belongs to $H\backslash A$. Note that
\begin{align*}
H\backslash A  &  =\{(a,b,c,-a):a,b,c>0\text{ and }b\geq a,c\geq a\}\\
&  \cup\{(a,b,c,-b):a,b,c>0\text{ and }a\geq b,c\geq b\}\\
&  \cup\{(a,b,c,-c):a,b,c>0\text{ and }a\geq c,b\geq c\}.
\end{align*}
Let us first consider the set
\[
\{(a,b,c,-a):a,b,c>0\text{ and }b\geq a,c\geq a\}.
\]
In this case, let
\[
g(x,y,z):=f(x,y,z,-x)=\frac{(2x^{\frac{4}{3}}+y^{\frac{4}{3}}+z^{\frac{4}{3}%
})^{\frac{3}{4}}}{y+z}.
\]
Thus
\[
g_{x}=\frac{2x^{\frac{1}{3}}}{\Vert(x,y,z,-x)\Vert_{\frac{4}{3}}^{\frac{1}{3}%
}(y+z)}.
\]
and
\[
g_{x}=0\Leftrightarrow x=0.
\]
Hence, the maximum of $g$ does not belong to $\{(a,b,c,-a):a,b,c>0\text{ and
}b>a,c>a\}$, i.e., the maximum belongs to
\[
\{(a,b,c,-a):a,b,c>0\text{ and }b\geq a,c\geq a\}\backslash
\{(a,b,c,-a):a,b,c>0\text{ and }b>a,c>a\}=
\]%
\[
\left\{  (a,a,c,-a):a,c>0\text{ and }c\geq a\right\}  \cup\left\{
(a,b,a,-a):a,b>0\text{ and }b\geq a\right\}  .
\]
Considering the set $\{(a,a,c,-a):a,c>0\text{ and }c\geq a\}$, we define
\[
h(x,z):=f(x,x,z,-x)=\frac{(3x^{\frac{4}{3}}+z^{\frac{4}{3}})^{\frac{3}{4}}%
}{x+z}.
\]
and
\[
h_{x}=\frac{3x^{\frac{1}{3}}-\Vert(x,x,z,-x)\Vert_{\frac{4}{3}}^{\frac{4}{3}}%
}{\Vert(x,x,z,-x)\Vert_{\frac{4}{3}}^{\frac{1}{3}}(x+z)^{2}},
\]%
\[
h_{z}=\frac{z^{\frac{1}{3}}-\Vert(x,x,z,-x)\Vert_{\frac{4}{3}}^{\frac{4}{3}}%
}{\Vert(x,x,z,-x)\Vert_{\frac{4}{3}}^{\frac{1}{3}}(x+z)^{2}}.
\]
Thus
\[
\nabla h=0\Leftrightarrow(x,z) = \left( \frac{1}{28},\frac{27}{28}\right) 
\]
and hence $h\left( \frac{1}{28},\frac{27}{28}\right) <\sqrt{2}$. Note that
\[
f(x,x,x,-x)=\frac{\Vert(x,x,x,-x)\Vert_{\frac{4}{3}}}{x+x+x+-x}=\frac
{2^{\frac{3}{2}}x}{2x}=\sqrt{2}.
\]
The other cases are similar.

Now we consider the case ($\beta$). Defining
\[
f(x,y,z,w):=\frac{\Vert(x,y,z,w)\Vert_{\frac{4}{3}}}{x+y-z-w},
\]
we have
\[
f_{x}(x,y,z,w)=\frac{x^{\frac{1}{3}}(x+y-z-w)-\Vert(x,y,z,w)\Vert_{\frac{4}%
{3}}^{\frac{4}{3}}}{\Vert(x,y,z,w)\Vert_{\frac{4}{3}}^{\frac{1}{3}%
}(x+y-z-w)^{2}},
\]%
\[
f_{y}(x,y,z,w)=\frac{y^{\frac{1}{3}}(x+y-z-w)-\Vert(x,y,z,w)\Vert_{\frac{4}%
{3}}^{\frac{4}{3}}}{\Vert(x,y,z,w)\Vert_{\frac{4}{3}}^{\frac{1}{3}%
}(x+y-z-w)^{2}},
\]%
\[
f_{z}(x,y,z,w)=\frac{z^{\frac{1}{3}}(x+y-z-w)+\Vert(x,y,z,w)\Vert_{\frac{4}%
{3}}^{\frac{4}{3}}}{\Vert(x,y,z,w)\Vert_{\frac{4}{3}}^{\frac{1}{3}%
}(x+y-z-w)^{2}},
\]
and
\[
f_{w}(x,y,z,w)=\frac{w^{\frac{1}{3}}(x+y-z-w)+\Vert(x,y,z,w)\Vert_{\frac{4}%
{3}}^{\frac{4}{3}}}{\Vert(x,y,z,w)\Vert_{\frac{4}{3}}^{\frac{1}{3}%
}(x+y-z-w)^{2}}.
\]
Since $z,(x+y-z-w),\Vert(x,y,z,w)\Vert_{\frac{4}{3}}>0$, we have
\[
z^{\frac{1}{3}}(x+y-z-w)+\Vert(x,y,z,w)\Vert_{\frac{4}{3}}^{\frac{4}{3}}>0,
\]
and $\nabla f\neq0$. Note that in ($\beta$) we have $c\leq-d$ and, by Lemma
\ref{maxpos}, we have
\begin{align*}
H_{1}  &  :=\{(a,b,c,d):a,b,c>0,d<0,a+b-c-d\geq|a-b|+c-d\}\\
&  =\{(a,b,c,d):a,b,c>0,\ d<0,\ -d\geq c,b\geq c,a\geq c\}.
\end{align*}
So, if $(x,y,z,w)\in A_{1}:=\{(a,b,c,d):a,b,c>0,\ d<0,\ -d>c,b>c,a>c\}$, then
$\nabla f(x,y,z,w)\neq0$, and the maximum of $f$ does not belong to $A$. We
conclude that the maximum of $f$ belongs to
\begin{align*}
H_{1}\backslash A  &  =\{(a,b,a,d):a,b>0,\ d<0\text{ and }b\geq a,-d\geq a\}\\
&  \cup\{(a,b,b,d):a,b>0,\ d<0\text{ and }a\geq b,-d\geq b\}\\
&  \cup\{(a,b,c,-c):a,b,c>0\text{ and }a\geq c,b\geq c\}.
\end{align*}
For $\{(a,b,a,d):a,b>0,\ d<0\text{ and }b\geq a,-d\geq a\}$, we define
\[
g(x,y,w):=\frac{(2x^{\frac{4}{3}}+y^{\frac{4}{3}}+w^{\frac{4}{3}})^{\frac
{3}{4}}}{y-w}%
\]
and
\[
g_{x}=\frac{2x^{\frac{1}{3}}}{\Vert(x,y,x,w)\Vert_{\frac{4}{3}}^{\frac{1}{3}%
}(y-w)}.
\]
We thus conclude that
\[
g_{x}=0\Leftrightarrow x=0
\]
and $\nabla g\neq0$ for all points of $\{(a,b,a,d):a,b>0,\ d<0\text{ and
}b>a,-d>a\}$. Hence, the maximum of $g$ belongs to
\[
\{(a,b,a,d):a,b>0,\ d<0\text{ and }b\geq a,-d\geq a\}\backslash
\{(a,b,a,d):a,b>0,\ d<0\text{ and }b>a,-d>a\}=
\]%
\[
\{(a,a,a,d):a>0,\ d<0\text{ and }-d\geq a\}\cup\{(a,b,a,-a):a,b>0\text{ and
}b\geq a\}.
\]
For the set $\{(a,a,a,d):a>0,\ d<0\text{ and }-d\geq a\}$, we define
\[
h(x,w):=\frac{(3x^{\frac{4}{3}}+w^{\frac{4}{3}})^{\frac{3}{4}}}{x-w}.
\]
Note that
\[
h_{x}=\frac{3x^{\frac{1}{3}}-\Vert(x,x,x,w)\Vert_{\frac{4}{3}}^{\frac{4}{3}}%
}{\Vert(x,x,x,w)\Vert_{\frac{4}{3}}^{\frac{1}{3}}(x-w)^{2}}%
\]
and
\[
h_{w}=\frac{w^{\frac{1}{3}}+\Vert(x,x,x,w)\Vert_{\frac{4}{3}}^{\frac{4}{3}}%
}{\Vert(x,x,x,w)\Vert_{\frac{4}{3}}^{\frac{1}{3}}(x-w)^{2}}.
\]
Thus
\[
3x^{\frac{1}{3}}-\Vert(x,x,x,w)\Vert_{\frac{4}{3}}^{\frac{4}{3}} =0 \text{ and
}w^{\frac{1}{3}}+\Vert(x,x,x,w)\Vert_{\frac{4}{3}}^{\frac{4}{3}}%
=0\Leftrightarrow(x,w) = \left( 0,0\right) .
\]
Hence the maximum of $f$ is obtained in
\[
f(x,x,x,-x)=\sqrt{2}.
\]
The other cases are similar.

The case $\Vert T\Vert=|a_{11}-a_{21}|+a_{12}-a_{22}$ is divided in two cases
\[
(\gamma)\ \Vert T\Vert=a_{11}-a_{21}+a_{12}-a_{22},
\]%
\[
(\delta)\ \Vert T\Vert=-a_{11}+a_{21}+a_{12}-a_{22}.
\]
If ($\gamma$) holds, we consider
\[
f(x,y,z,w)=\frac{\Vert(x,y,z,w)\Vert_{\frac{4}{3}}}{x-y+z-w},
\]
and if ($\delta$) holds we consider
\[
f(x,y,z,w)=\frac{\Vert(x,y,z,w)\Vert_{\frac{4}{3}}}{-x+y+z-w}.
\]
For these cases, using Lemma \ref{maxneg} we prove that the maximum is
attained when $x=y=z=-w$.
\end{proof}

\section{The complex case: numerical and analytical
considerantions\label{complexo}}

It is well known (see \cite{jjj}) that for complex scalars we have%
\[
\left(  \sum\limits_{i,j=1}^{\infty}\left\vert U(e_{i},e_{j})\right\vert
^{\frac{4}{3}}\right)  ^{\frac{3}{4}}\leq\frac{2}{\sqrt{\pi}}\left\Vert
U\right\Vert
\]
for all continuous $m$-linear forms $U:c_{0}\times c_{0}\rightarrow\mathbb{C}%
$, but the it is unknown if the constant $\frac{2}{\sqrt{\pi}}$ is sharp.

Since the optimal constant of the Littlewood's $4/3$ inequality is achieved
when considering simple looking bilinear forms with only four monomials, it is
natural to begin the investigation of the complex case in a similar setting.
We have strong numerical evidence that considering $T:c_{0}\times
c_{0}\rightarrow\mathbb{C}$ given by $T(z,w)=\sum_{i,j=1}^{2}a_{ij}z_{i}w_{j}$
with $a_{ij}\in\mathbb{R}$ the optimal constant is $1$. \ For instance, we can
consider a discretized region $S$ in $[-1,1]^{4}$ such that for any
$T(z,w)=\sum_{i,j=1}^{2}a_{ij}z_{i}w_{j}$ with $a_{ij}\in\lbrack-1,1],$ there
are $s_{1},s_{2},s_{3},s_{4}\in S$ such that
\[
\left\vert a_{11}-s_{1}\right\vert ,\left\vert a_{12}-s_{2}\right\vert
,\left\vert a_{21}-s_{3}\right\vert ,\left\vert a_{22}-s_{4}\right\vert
<10^{-1},
\]
and, for all such $s_{1},s_{2},s_{3},s_{4}$ we have%
\[
\left(  \sum_{i,j=1}^{2}|U(e_{i},e_{j})|^{\frac{4}{3}}\right)  ^{\frac{3}{4}%
}\leq\Vert U\Vert,
\]
for $U(z,w)=s_{1}z_{1}w_{1}+s_{2}z_{1}w_{2}+s_{3}z_{2}w_{1}+s_{4}z_{2}w_{2}.$

The following theorem gives a formal proof that in several cases the optimal
constant is in fact $1:$

\begin{theorem}
\label{caractcomp} Let $T:c_{0}\times c_{0}\rightarrow\mathbb{C}$ be given by
$T(z,w)=\sum_{i,j=1}^{2}a_{ij}z_{i}w_{j}$ with $a_{ij}\in\mathbb{R}$. Then
\[
\left(  \sum_{i,j=1}^{2}|T(e_{i},e_{j})|^{\frac{4}{3}}\right)  ^{\frac{3}{4}%
}\leq\Vert T\Vert,
\]
when

(1) $a_{11}a_{21}=0$ or $a_{12}a_{22}=0$;

(2) $a_{11}a_{21}>0$ and $a_{12}a_{22}>0$;

(3) $a_{11}a_{21}<0$ and $a_{12}a_{22}<0$;

(4) $a_{11}a_{21}>0$ and $a_{12}a_{22}<0$ and $a_{11}a_{21}+a_{12}a_{22}=0$;

(5) $a_{11}a_{21}<0$ and $a_{12}a_{22}>0$ and $a_{11}a_{21}+a_{12}a_{22}=0$.
\end{theorem}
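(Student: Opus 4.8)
The plan is to establish the inequality
\[
\left(\sum_{i,j=1}^{2}|T(e_{i},e_{j})|^{\frac{4}{3}}\right)^{\frac{3}{4}}=\Vert(a_{11},a_{21},a_{12},a_{22})\Vert_{\frac{4}{3}}\leq\Vert T\Vert
\]
by comparing the $\ell_{4/3}$ norm of the coefficient vector against the explicit norm formula of Proposition \ref{normcomp}. The governing observation is that the monotonicity $\Vert v\Vert_{\frac{4}{3}}\leq\Vert v\Vert_{1}$ holds for every vector $v$, so in any case where the complex norm $\Vert T\Vert$ coincides with the $\ell_{1}$ norm of the coefficients the inequality is immediate. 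My strategy is therefore to treat each listed case by identifying the branch of Proposition \ref{normcomp} that applies and then either reducing to the $\ell_{1}$ comparison or handling the genuinely two-dimensional maximizing branch (A) directly.

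Concretely, I would proceed as follows. In case (1) the factorization in Proposition \ref{normcomp}(B) degenerates as in its Case 3, and one checks that $\Vert T\Vert$ reduces to a sum of two absolute values of coefficients, say $|a_{ij}|+|a_{k\ell}|$; since the remaining two coefficients vanish, $\Vert v\Vert_{\frac{4}{3}}=\big(|a_{ij}|^{4/3}+|a_{k\ell}|^{4/3}\big)^{3/4}\leq|a_{ij}|+|a_{k\ell}|=\Vert T\Vert$ by the same $\ell_{4/3}\leq\ell_{1}$ inequality on two coordinates. In cases (2) and (3) the sign condition $\mathrm{sgn}\!\left(\tfrac{a_{11}a_{21}}{a_{12}a_{22}}\right)=+1$ places us in branch (B) (Case 1 of its proof), so $\Vert T\Vert=\max\{|a_{11}+a_{21}|+|a_{12}+a_{22}|,\,|a_{11}-a_{21}|+|a_{12}-a_{22}|\}$. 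Because all four coefficients share a compatible sign pattern, one of these two maxima equals $\Vert(a_{11},a_{21},a_{12},a_{22})\Vert_{1}$ exactly (the signs add constructively in both slots simultaneously), and the $\ell_{4/3}\leq\ell_{1}$ bound finishes these cases, mirroring the argument already used in the real Littlewood theorem above.

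The delicate cases are (4) and (5), where $\mathrm{sgn}\!\left(\tfrac{a_{11}a_{21}}{a_{12}a_{22}}\right)=-1$, so branch (A) of Proposition \ref{normcomp} may be active and $\Vert T\Vert$ can strictly exceed the $\ell_{1}$ norm. Here the extra hypothesis $a_{11}a_{21}+a_{12}a_{22}=0$ is the crucial simplification: I would substitute this relation into the radicand
\[
\cos t_{0}=\frac{\frac{a_{11}^{2}a_{21}^{2}}{a_{12}^{2}a_{22}^{2}}(a_{12}^{2}+a_{22}^{2})-(a_{11}^{2}+a_{21}^{2})}{2a_{11}a_{21}\big(1-\frac{a_{11}a_{21}}{a_{12}a_{22}}\big)}
\]
and simplify the resulting closed form for the branch-(A) value of $\Vert T\Vert$. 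The plan is to show that under $a_{11}a_{21}=-a_{12}a_{22}$ this expression collapses to something explicit whose square can be compared head-on with $\Vert(a_{11},a_{21},a_{12},a_{22})\Vert_{\frac{4}{3}}^{2}$; I expect the constraint to force the two square-root terms to combine so that the branch-(A) maximum still does not beat $\Vert v\Vert_{1}$, whence the $\ell_{4/3}\leq\ell_{1}$ inequality applies once more. The main obstacle I anticipate is the algebraic management of this branch-(A) term: verifying that the constraint $a_{11}a_{21}+a_{12}a_{22}=0$ keeps us within (and not strictly above) the $\ell_{1}$ bound requires a careful, possibly case-split, manipulation of the radicals, and one must also confirm that the admissibility condition $|\cos t_{0}|\leq 1$ of branch (A) is consistent with the hypothesis so that no maximizer is overlooked.
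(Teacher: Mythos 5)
Your treatment of cases (1)--(3) is correct and is exactly the paper's argument: in those cases the sign conditions put you in branch (B) of Proposition \ref{normcomp}, the norm coincides with $\Vert(a_{11},a_{21},a_{12},a_{22})\Vert_{1}$, and $\ell_{4/3}\leq\ell_{1}$ finishes.

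The gap is in cases (4) and (5), where your stated expectation --- that the constraint $a_{11}a_{21}+a_{12}a_{22}=0$ ``keeps us within the $\ell_{1}$ bound, whence the $\ell_{4/3}\leq\ell_{1}$ inequality applies once more'' --- is false, and the reduction to an $\ell_{1}$ comparison cannot be repaired. Take (WLOG) $a_{11},a_{21},a_{12}>0$, $a_{22}<0$, so that $a_{21}=ta_{12}$, $a_{22}=-ta_{11}$ for some $t>0$; e.g.\ $v=(a_{11},a_{21},a_{12},a_{22})=(1,t,1,-t)$ with $0<t<1$. Here both branch-(B) values equal $2$, the branch-(A) admissibility condition holds ($\cos t_{0}=0$), and the branch-(A) value is $2\sqrt{1+t^{2}}$, so $\Vert T\Vert=2\sqrt{1+t^{2}}<2+2t=\Vert v\Vert_{1}$. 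Since $\Vert T\Vert$ is \emph{strictly below} the $\ell_{1}$ norm throughout case (4), the inequality $\Vert v\Vert_{4/3}\leq\Vert v\Vert_{1}$ yields nothing. The mechanism that actually works (and is what the paper does) is a comparison with the $\ell_{2}$ norm: under $a_{12}a_{22}=-a_{11}a_{21}$ both radicands in branch (A) collapse to $\tfrac{1}{2}\left(a_{11}^{2}+a_{21}^{2}+a_{12}^{2}+a_{22}^{2}\right)$, so when branch (A) is admissible one gets the exact value $\Vert T\Vert=\sqrt{2}\,\Vert v\Vert_{2}$, and then H\"older in $\mathbb{R}^{4}$ gives $\Vert v\Vert_{4/3}\leq 4^{1/4}\Vert v\Vert_{2}=\sqrt{2}\,\Vert v\Vert_{2}=\Vert T\Vert$. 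Moreover, the admissibility condition $|\cos t_{0}|\leq 1$ is \emph{not} automatic under the hypothesis: the paper must separately treat the complementary subcase (characterized via Lemma \ref{tec01}), where the norm is given by branch (B) yet is still strictly below $\Vert v\Vert_{1}$; there one needs a genuine one-parameter optimization (the functions $f$ and $g$ of $t$, whose interior critical points turn out to be minima, with boundary and limit values at most $1$). Your proposal compresses this entire second subcase into ``confirm that the admissibility condition is consistent,'' but it is a real case that consumes most of the work in the paper's proof of (4), and without both ingredients --- the $\sqrt{2}\,\ell_{2}$ identity with H\"older, and the inadmissible subcase analysis --- the argument does not close.
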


\begin{proof}
For the case (1), there is no loss of generality in supposing $a_{11}=0$. By
Proposition \ref{normcomp}, we have
\[
\Vert T\Vert=\max\{|a_{21}|+|a_{12}+a_{22}|,|a_{21}|+|a_{12}-a_{22}|\}.
\]
If $a_{12}a_{22}\geq0$, then $\Vert T\Vert=|a_{21}|+|a_{12}+a_{22}%
|=\Vert(a_{11},a_{12},a_{21},a_{22})\Vert_{1}$. Now, if $a_{12}a_{22}\leq0$,
then $\Vert T\Vert=|a_{21}|+|a_{12}-a_{22}|=\Vert(a_{11},a_{12},a_{21}%
,a_{22})\Vert_{1}$. Therefore
\[
\frac{\left(  \sum_{i,j=1}^{2}|T(e_{i},e_{j})|^{\frac{4}{3}}\right)
^{\frac{3}{4}}}{\Vert T\Vert}\leq1.
\]

For the case (2), by Proposition \ref{normcomp}, we have
\begin{align*}
\Vert T\Vert &  =\max\{|a_{11}+a_{21}|+|a_{12}+a_{22}|,|a_{11}-a_{21}%
|+|a_{12}-a_{22}|\}\\
&  =|a_{11}+a_{21}|+|a_{12}+a_{22}|=\Vert(a_{11},a_{12},a_{21},a_{22}%
)\Vert_{1},
\end{align*}
and, again,
\[
\frac{\left(  \sum_{i,j=1}^{2}|T(e_{i},e_{j})|^{\frac{4}{3}}\right)
^{\frac{3}{4}}}{\Vert T\Vert}\leq1.
\]

Considering (3), again Proposition \ref{normcomp} tells us that
\begin{align*}
\Vert T\Vert &  =\max\{|a_{11}+a_{21}|+|a_{12}+a_{22}|,|a_{11}-a_{21}%
|+|a_{12}-a_{22}|\}\\
&  =|a_{11}-a_{21}|+|a_{12}-a_{22}|=\Vert(a_{11},a_{12},a_{21},a_{22}%
)\Vert_{1},
\end{align*}
and
\[
\frac{\left(  \sum_{i,j=1}^{2}|T(e_{i},e_{j})|^{\frac{4}{3}}\right)
^{\frac{3}{4}}}{\Vert T\Vert}\leq1.
\]

Now we deal with the case (4). There is no loss of generality in supposing
$a_{11},a_{21},a_{21}>0$ and $a_{22}<0$. Since $a_{12}a_{22}=-a_{11}a_{21}$ we
have
\[
2a_{11}a_{21}\left(  1-\frac{a_{11}a_{21}}{a_{12}a_{22}}\right)
=4a_{11}a_{21}=-4a_{12}a_{22}%
\]
and
\[
\frac{a_{11}^{2}a_{21}^{2}}{a_{12}^{2}a_{22}^{2}}\left(  a_{12}^{2}+a_{22}%
^{2}\right)  -\left(  a_{11}^{2}+a_{21}^{2}\right)  =a_{12}^{2}+a_{22}%
^{2}-(a_{11}^{2}+a_{21}^{2}),
\]
and we obtain
\[
\frac{\frac{a_{11}^{2}a_{21}^{2}}{a_{12}^{2}a_{22}^{2}}\left(  a_{12}%
^{2}+a_{22}^{2}\right)  -\left(  a_{11}^{2}+a_{21}^{2}\right)  }{2a_{11}%
a_{21}\left(  1-\frac{a_{11}a_{21}}{a_{12}a_{22}}\right)  }=\frac{a_{12}%
^{2}+a_{22}^{2}-(a_{11}^{2}+a_{21}^{2})}{4a_{11}a_{21}}.
\]
If
\[
\left\vert \frac{\frac{a_{11}^{2}a_{21}^{2}}{a_{12}^{2}a_{22}^{2}}\left(
a_{12}^{2}+a_{22}^{2}\right)  -\left(  a_{11}^{2}+a_{21}^{2}\right)  }%
{2a_{11}a_{21}\left(  1-\frac{a_{11}a_{21}}{a_{12}a_{22}}\right)  }\right\vert
\leq1,
\]
then
\[
a_{11}^{2}+a_{21}^{2}+2a_{11}a_{21}\frac{a_{12}^{2}+a_{22}^{2}-(a_{11}%
^{2}+a_{21}^{2})}{4a_{11}a_{21}}\geq0
\]
and
\begin{align}
\sqrt{a_{11}^{2}+a_{21}^{2}+2a_{11}a_{21}\frac{a_{12}^{2}+a_{22}^{2}%
-(a_{11}^{2}+a_{21}^{2})}{4a_{11}a_{21}}}  &  =\sqrt{a_{11}^{2}+a_{21}%
^{2}+\frac{a_{12}^{2}+a_{22}^{2}-(a_{11}^{2}+a_{21}^{2})}{2}}
\label{estrela11}\\
&  =\sqrt{\frac{a_{12}^{2}+a_{22}^{2}+a_{11}^{2}+a_{21}^{2}}{2}}\nonumber\\
&  =2^{-\frac{1}{2}}\sqrt{a_{12}^{2}+a_{22}^{2}+a_{11}^{2}+a_{21}^{2}%
}.\nonumber
\end{align}
A similar reasoning tells us that
\begin{equation}
\sqrt{a_{12}^{2}+a_{22}^{2}+2a_{12}a_{22}\frac{a_{12}^{2}+a_{22}^{2}%
-(a_{11}^{2}+a_{21}^{2})}{4a_{11}a_{21}}}=2^{-\frac{1}{2}}\sqrt{a_{12}%
^{2}+a_{22}^{2}+a_{11}^{2}+a_{21}^{2}}. \label{estrela22}%
\end{equation}
Summing up (\ref{estrela11}) and (\ref{estrela22}) we obtain%
\begin{align*}
\sqrt{a_{11}^{2}+a_{21}^{2}+2a_{11}a_{21}\frac{a_{12}^{2}+a_{22}^{2}%
-(a_{11}^{2}+a_{21}^{2})}{4a_{11}a_{21}}}+\sqrt{a_{12}^{2}+a_{22}^{2}%
+2a_{12}a_{22}\frac{a_{12}^{2}+a_{22}^{2}-(a_{11}^{2}+a_{21}^{2})}%
{4a_{11}a_{21}}}  & \\
=2^{\frac{1}{2}}\sqrt{a_{11}^{2}+a_{21}^{2}+a_{12}^{2}+a_{22}^{2}}.  &
\end{align*}
By Proposition \ref{normcomp} we conclude that
\[
\Vert T\Vert=2^{\frac{1}{2}}\sqrt{a_{11}^{2}+a_{21}^{2}+a_{12}^{2}+a_{22}^{2}%
}.
\]
Therefore, by the H\"{o}lder inequality we have
\[
\Vert(a_{11},a_{12},a_{21},a_{22})\Vert_{\frac{4}{3}}\leq2^{\frac{1}{2}}%
\Vert(a_{11},a_{12},a_{21},a_{22})\Vert_{2}=\Vert T\Vert.
\]

Recall that by Lemma \ref{tec01}, we have
\[
\left\vert \frac{\frac{a_{11}^{2}a_{21}^{2}}{a_{12}^{2}a_{22}^{2}}\left(
a_{12}^{2}+a_{22}^{2}\right)  -\left(  a_{11}^{2}+a_{21}^{2}\right)  }%
{2a_{11}a_{21}\left(  1-\frac{a_{11}a_{21}}{a_{12}a_{22}}\right)  }\right\vert
\geq1
\]
if, and only if,
\[
\left\vert \frac{a_{12}a_{22}}{a_{11}a_{21}}(a_{11}-a_{21})\right\vert \geq
a_{12}-a_{22}\text{ or }\left\vert \frac{a_{11}a_{21}}{a_{12}a_{22}}%
(a_{12}+a_{22})\right\vert \geq a_{11}+a_{21}.
\]
Since $a_{11}a_{21}+a_{12}a_{22}=0$, it follows that
\[
(a_{21},a_{22})\in\mathbb{R}(a_{12},-a_{11}),
\]
i.e., $a_{21}=ta_{12}$ and $a_{22}=-ta_{11}$ with $t>0$. Thus, by Proposition
\ref{normcomp} we have
\[
\Vert T\Vert=\max\{a_{11}+ta_{12}+|a_{12}-ta_{11}|,|a_{11}-ta_{12}%
|+a_{12}+ta_{11}\}.
\]
Note that we have two cases:

(a.1) $\|T\| = a_{11}+ta_{12}+|a_{12}-ta_{11}|$;

(a.2) $\Vert T\Vert=|a_{11}-ta_{12}|+a_{12}+ta_{11}$.

In the case (a.1), by Lemma \ref{maxpos}, we have
\begin{equation}
a_{11}\geq-a_{22},\ a_{21}\geq-a_{22},\ a_{12}\geq-a_{22} \label{ppp}%
\end{equation}
or%
\begin{equation}
a_{11}\geq a_{12},\ a_{21}\geq a_{12},\ -a_{22}\geq a_{12}. \label{ppp22}%
\end{equation}
Let us suppose (\ref{ppp})$.$ Note that
\[
a_{11}\geq-a_{22},\ a_{21}\geq-a_{22},\ a_{12}\geq-a_{22}\Leftrightarrow1\geq
t,\ a_{12}\geq a_{11},\ a_{12}\geq ta_{11}.
\]
In this case,
\[
\Vert T\Vert=(1-t)a_{11}+(1+t)a_{12}.
\]
Since $0<t\leq1$, we have
\[
t-1\leq0\Rightarrow(t-1)a_{12}\leq(t+1)a_{11}\Rightarrow-a_{12}-ta_{11}\leq
a_{11}-ta_{12}%
\]
and
\[
1-t\leq1+t\Rightarrow(1-t)a_{11}\leq(1+t)a_{12}\Rightarrow a_{11}-ta_{12}\leq
a_{12}+ta_{11}.
\]
We thus conclude that
\begin{equation}
|a_{11}-ta_{12}|\leq a_{12}+ta_{11}. \label{es}%
\end{equation}
Note that
\begin{equation}
(t-1)a_{11}\leq(1+t)a_{12}\Rightarrow-a_{11}-ta_{12}\leq a_{12}-ta_{11}.
\label{es2}%
\end{equation}
and also that
\begin{equation}
a_{12}-ta_{11}\leq a_{11}+ta_{12}\Leftrightarrow a_{12}-a_{11}\leq
t(a_{11}+a_{12})\Leftrightarrow\frac{a_{12}-a_{11}}{a_{11}+a_{12}}\leq t.
\label{es3}%
\end{equation}
If $a_{11}=a_{12}$, then combining the information of (\ref{es2}) and
(\ref{es3}) we have
\[
|a_{12}-ta_{11}|\leq a_{11}+ta_{12}%
\]
and by Lemma \ref{tec01} we conclude that
\begin{equation}
\left\vert \frac{\frac{a_{11}^{2}a_{21}^{2}}{a_{12}^{2}a_{22}^{2}}\left(
a_{12}^{2}+a_{22}^{2}\right)  -\left(  a_{11}^{2}+a_{21}^{2}\right)  }%
{2a_{11}a_{21}\left(  1-\frac{a_{11}a_{21}}{a_{12}a_{22}}\right)  }\right\vert
\leq1. \label{u78}%
\end{equation}
By (\ref{u78}), using the previous estimates the proof is completed for this case.

Now suppose
\[
a_{11}<a_{12}\text{ and }t\leq\frac{a_{12}-a_{11}}{a_{11}+a_{12}}.
\]
By (\ref{es3}), we have
\[
a_{11}+ta_{12}\leq a_{12}-ta_{11}.
\]
In this case, fixing $a_{11},a_{12}>0$, we define $f:[0,\frac{a_{12}-a_{11}%
}{a_{11}+a_{12}}]\rightarrow\mathbb{R}$ by
\[
f(t)=\frac{\left(  (1+t^{\frac{4}{4}})(a_{11}^{\frac{4}{3}}+a_{12}^{\frac
{4}{3}})\right)  ^{\frac{3}{4}}}{\Vert T\Vert}.
\]
Since
\[
f(0)=\frac{\left(  a_{11}^{\frac{4}{3}}+a_{12}^{\frac{4}{3}}\right)
^{\frac{3}{4}}}{a_{11}+a_{12}}\leq1
\]
and
\[
f\left(  \frac{a_{12}-a_{11}}{a_{11}+a_{12}}\right)  \leq1,
\]
we have
\[
f^{\prime}(t)=\frac{t^{\frac{1}{3}}(a_{11}^{\frac{4}{3}}+a_{12}^{\frac{4}{3}%
})((1-t)a_{11}+(1+t)a_{12})-\left(  (1+t^{\frac{4}{4}})(a_{11}^{\frac{4}{3}%
}+a_{12}^{\frac{4}{3}})\right)  (a_{12}-a_{11})}{\left(  (1+t^{\frac{4}{4}%
})(a_{11}^{\frac{4}{3}}+a_{12}^{\frac{4}{3}})\right)  ^{\frac{1}{4}%
}((1-t)a_{11}+(1+t)a_{12})^{2}}.
\]
Thus, $f^{\prime}(t_{0})=0$ if, and only if,
\[
t_{0}=-\frac{(a_{11}-a_{12})^{3}}{(a_{11}+a_{12})^{3}}.
\]
But $t_{0}$ is a point of minimum for $f$ and thus $\max f\leq1$.

Now we consider the case (\ref{ppp22}).

Note that
\[
a_{11}\geq a_{12},\ a_{21}\geq a_{12},\ -a_{22}\geq a_{12}\Leftrightarrow
a_{11}\geq a_{12},\ t\geq1,\ ta_{11}\geq a_{12}%
\]
and in this case
\[
\Vert T\Vert=(1+t)a_{11}+(t-1)a_{12}.
\]
Since $1\leq t$, we have
\begin{equation}
(t-1)\leq(t+1)\Rightarrow(t-1)a_{11}\leq(t+1)a_{12}\Rightarrow-a_{11}%
-ta_{12}\leq a_{12}-ta_{11}. \label{xis}%
\end{equation}
and
\begin{equation}
1-t\leq0\Rightarrow(1-t)a_{12}\leq(1+t)a_{11}\Rightarrow a_{12}-ta_{11}\leq
a_{11}+ta_{12}. \label{xis22}%
\end{equation}
Thus, by (\ref{xis}) and (\ref{xis22}), we have
\[
|a_{12}-ta_{11}|\leq a_{11}+ta_{12}.
\]
Note also that
\begin{equation}
(1-t)a_{11}\leq(1+t)a_{12}\Rightarrow a_{11}-ta_{12}\leq a_{12}+ta_{11}
\label{qaz}%
\end{equation}
and
\begin{equation}
-a_{12}-ta_{11}\leq a_{11}-ta_{12}\Leftrightarrow-a_{11}-a_{12}\leq
t(a_{12}-a_{11}). \label{qazx}%
\end{equation}
If $a_{12}=a_{11}$, by (\ref{xis}) and (\ref{xis22}), we have
\[
|a_{11}-ta_{12}|\leq a_{12}+ta_{11}%
\]
and thus
\[
\left\vert \frac{\frac{a_{11}^{2}a_{21}^{2}}{a_{12}^{2}a_{22}^{2}}\left(
a_{12}^{2}+a_{22}^{2}\right)  -\left(  a_{11}^{2}+a_{21}^{2}\right)  }%
{2a_{11}a_{21}\left(  1-\frac{a_{11}a_{21}}{a_{12}a_{22}}\right)  }\right\vert
\leq1
\]
and the proof of this case is done. If $a_{12}<a_{11}$ and $\frac
{a_{11}+a_{12}}{a_{11}-a_{12}}\leq t$ then, by (\ref{qazx}) we have
\[
a_{11}-ta_{12}\leq-a_{12}-ta_{11}.
\]
Fixing $a_{11},a_{12}>0$, we consider $g:\left[  \frac{a_{11}+a_{12}}%
{a_{11}-a_{12}},\infty\right)  \rightarrow\mathbb{R}$ by
\[
g(t)=\frac{\left(  (1+t^{\frac{4}{3}})(a_{11}^{\frac{4}{3}}+a_{12}^{\frac
{4}{3}})\right)  ^{\frac{3}{4}}}{\Vert T\Vert}.
\]
Note that
\[
g\left(  \frac{a_{11}+a_{12}}{a_{11}-a_{12}}\right)  \leq1
\]
and
\[
g(t)=\frac{t\left(  (a_{11}^{\frac{4}{3}}+a_{12}^{\frac{4}{3}})+\frac
{(a_{11}^{\frac{4}{3}}+a_{12}^{\frac{4}{3}})}{t^{\frac{4}{3}}}\right)
^{\frac{3}{4}}}{t((a_{11}+a_{12})+\frac{a_{11}-a_{12}}{t})}=\frac{\left(
(a_{11}^{\frac{4}{3}}+a_{12}^{\frac{4}{3}})+\frac{(a_{11}^{\frac{4}{3}}%
+a_{12}^{\frac{4}{3}})}{t^{\frac{4}{3}}}\right)  ^{\frac{3}{4}}}%
{((a_{11}+a_{12})+\frac{a_{11}-a_{12}}{t})},
\]
and
\[
\displaystyle\lim_{t\rightarrow\infty}g(t)=\frac{\left(  (a_{11}^{\frac{4}{3}%
}+a_{12}^{\frac{4}{3}})\right)  ^{\frac{3}{4}}}{a_{11}+a_{12}}\leq1.
\]
Moreover,
\[
g^{\prime}(t)=\frac{t^{\frac{1}{3}}(a_{11}^{\frac{4}{3}}+a_{12}^{\frac{4}{3}%
})((1+t)a_{11}+(t-1)a_{12})-\left(  (1+t^{\frac{4}{4}})(a_{11}^{\frac{4}{3}%
}+a_{12}^{\frac{4}{3}})\right)  (a_{11}+a_{12})}{\left(  (1+t^{\frac{4}{4}%
})(a_{11}^{\frac{4}{3}}+a_{12}^{\frac{4}{3}})\right)  ^{\frac{1}{4}%
}((1+t)a_{11}+(t-1)a_{12})^{2}}.
\]
Therefore $g^{\prime}(t_{0})=0$ if, and only if,
\[
t_{0}=\frac{(a_{11}+a_{12})^{3}}{(a_{11}-a_{12})^{3}},
\]
and $t_{0}$ is a point of minimum for $g$ and we conclude that $\max g\leq1$.

The case (a.2) is similar and (5) is analogous to (4).
\end{proof}

\begin{corollary}
Let $\alpha\neq0$. Then $T:\ell_{\infty}^{2}(\mathbb{C})\times\ell_{\infty
}^{2}(\mathbb{C})\rightarrow\mathbb{C}$ given by
\[
T(x,y)=\alpha z_{1}w_{1}+\alpha z_{1}w_{2}+\alpha z_{2}w_{1}-\alpha z_{2}w_{2}%
\]
or
\[
T(x,y)=\alpha z_{1}w_{1}+\alpha z_{1}w_{2}-\alpha z_{2}w_{1}+\alpha z_{2}w_{2}%
\]
or
\[
T(x,y)=\alpha z_{1}w_{1}-\alpha z_{1}w_{2}+\alpha z_{2}w_{1}+\alpha z_{2}w_{2}%
\]
or
\[
T(x,y)=-\alpha z_{1}w_{1}+\alpha z_{1}w_{2}+\alpha z_{2}w_{1}+\alpha
z_{2}w_{2}%
\]
satisfies
\[
\left(  \displaystyle\sum_{i,j=1}^{2}|T(e_{i},e_{j})|^{\frac{4}{3}}\right)
^{\frac{3}{4}}=\Vert T\Vert.
\]

\end{corollary}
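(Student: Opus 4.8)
The plan is to reduce to a single representative form and then verify the claimed equality by computing both sides explicitly. First I would record the two structural features shared by all four forms: the four coefficients have the common modulus $|a_{11}|=|a_{12}|=|a_{21}|=|a_{22}|=|\alpha|$, and they satisfy $a_{11}a_{21}+a_{12}a_{22}=0$ (for the first form, $a_{11}a_{21}=\alpha^{2}$ and $a_{12}a_{22}=-\alpha^{2}$). The four forms are carried into one another by the coordinate permutations $z_{1}\leftrightarrow z_{2}$, $w_{1}\leftrightarrow w_{2}$ and by the sign changes $z_{j}\mapsto -z_{j}$, $w_{j}\mapsto -w_{j}$; each such operation merely permutes the multiset $\{|a_{ij}|\}$ and leaves $\Vert T\Vert$ invariant, so both sides of the desired identity are unaffected. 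Moreover replacing $\alpha$ by $-\alpha$ multiplies $T$ by $-1$ and scales both sides equally. Hence it suffices to treat $T(z,w)=\alpha z_{1}w_{1}+\alpha z_{1}w_{2}+\alpha z_{2}w_{1}-\alpha z_{2}w_{2}$ with $\alpha>0$.

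The left-hand side is immediate. Since $T(e_{i},e_{j})=a_{ij}$ and each $|a_{ij}|=\alpha$, we get $\sum_{i,j=1}^{2}|T(e_{i},e_{j})|^{4/3}=4\alpha^{4/3}$, whence $\bigl(4\alpha^{4/3}\bigr)^{3/4}=4^{3/4}\alpha=2\sqrt{2}\,\alpha$. The real work is the right-hand side, which I would extract from Proposition \ref{normcomp}. I would first check that we are in case (A): all four coefficients are nonzero, $\mathrm{sgn}\!\left(\tfrac{a_{11}a_{21}}{a_{12}a_{22}}\right)=\mathrm{sgn}(-1)=-1$, and because $\tfrac{a_{11}^{2}a_{21}^{2}}{a_{12}^{2}a_{22}^{2}}(a_{12}^{2}+a_{22}^{2})-(a_{11}^{2}+a_{21}^{2})=2\alpha^{2}-2\alpha^{2}=0$, the required inequality $0\le\bigl|2a_{11}a_{21}(1-\tfrac{a_{11}a_{21}}{a_{12}a_{22}})\bigr|=4\alpha^{2}$ holds. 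The critical argument then satisfies $\cos t_{0}=0$, so the third entry of the maximum equals $\sqrt{2\alpha^{2}}+\sqrt{2\alpha^{2}}=2\sqrt{2}\,\alpha$, while the first two entries equal $|2\alpha|+0=2\alpha$ and $0+|-2\alpha|=2\alpha$. Thus $\Vert T\Vert=2\sqrt{2}\,\alpha$, which coincides with the left-hand side and establishes the identity.

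The main obstacle is precisely this last computation: one must confirm that the nontrivial third branch of the maximum in Proposition \ref{normcomp} is the active one. This is exactly where the complex case departs from the real case, in which Proposition \ref{889} yields only $\Vert T\Vert=2\alpha$ and hence the larger ratio $\sqrt{2}$; the additional complex directions enlarge the norm to $2\sqrt{2}\,\alpha$ and pull the ratio down to $1$. An equivalent route, which I would mention as a cross-check, is to note that the chosen form falls under case (4) of Theorem \ref{caractcomp} (with the relevant quantity equal to $0\le 1$), so the inequality ``$\le$'' is already available; for the reverse inequality it then suffices to observe that the H\"older estimate $\Vert(a_{ij})\Vert_{4/3}\le\sqrt{2}\,\Vert(a_{ij})\Vert_{2}=\Vert T\Vert$ used there becomes an equality precisely because all four coefficients share the common modulus $|\alpha|$.
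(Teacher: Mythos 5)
Your proposal is correct and follows essentially the route the paper intends: the paper states the corollary without a separate proof, as an instance of case (4) of Theorem \ref{caractcomp} (equivalently, case (A) of Proposition \ref{normcomp} with $\cos t_{0}=0$), where the norm equals $\sqrt{2}\,\Vert(a_{ij})\Vert_{2}=2\sqrt{2}\,|\alpha|$ and H\"older's inequality becomes an equality because all four coefficients share the modulus $|\alpha|$. Your direct computation, the symmetry reduction to one representative form, and the cross-check via case (4) all match this intended argument.
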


\section{Appendix: some elementary lemmata}

This section is entirely devoted to five elementary lemmata used in the paper.
The first two lemmata are quite simple and we omit their proof.

\begin{lemma}
\label{maxig}Let $a,b,c>0$ and $d<0$ be such that
\[
|a+b|+|c+d|=|a-b|+|c-d|.
\]
If $a\neq b$, then $b=-d$ or $a=-d$.
\end{lemma}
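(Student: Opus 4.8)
The plan is to exploit the sign hypotheses to eliminate the unambiguous absolute values and then recognize what remains as an identity about minima. First I would note that since $a,b>0$ we have $|a+b|=a+b$, and since $d<0$ we have $c-d=c+(-d)>0$, so $|c-d|=c-d$. With these two simplifications the hypothesis rearranges to
\[
a+b-|a-b| = (c-d)-|c+d|.
\]

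Next I would invoke the elementary identity $s+t-|s-t|=2\min\{s,t\}$, valid for all real $s,t$. Applying it with $(s,t)=(a,b)$ on the left-hand side, and with $(s,t)=(c,-d)$ on the right-hand side (using $c-d=c+(-d)$ and $|c+d|=|c-(-d)|$), the displayed equation collapses to
\[
2\min\{a,b\}=2\min\{c,-d\},\qquad\text{i.e.}\qquad \min\{a,b\}=\min\{c,-d\}.
\]
This reduction is the heart of the argument: it turns a four-term equation laden with absolute values into a transparent equality of two minima, from which the geometry of the conclusion is visible.

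From here the conclusion follows at once in the regime that matters. Because $a\neq b$, the left-hand minimum is attained at exactly one of the two coordinates. When $c\geq -d$ — which is precisely the standing situation under which the lemma is used in the extreme-point theorem, namely case (4CAAB), where one already knows $c\geq -d$ — the right-hand minimum equals $-d$, so $\min\{a,b\}=-d$ forces either $a=-d$ or $b=-d$, exactly as asserted.

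The one delicate point, and the step I expect to demand the most care, is the complementary possibility $c<-d$: there $\min\{c,-d\}=c$ and the identity yields only $\min\{a,b\}=c$, which by itself need not produce $a=-d$ or $b=-d$. Thus the clean statement of the lemma genuinely relies on the sign relation $c\geq -d$ present at each invocation, and in writing out the proof I would either carry that hypothesis explicitly or record that the degenerate case $c<-d$ simply does not arise in the applications. Once that is settled, everything else is routine.
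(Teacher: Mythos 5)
Your argument is correct and, since the paper explicitly omits the proof of this lemma (it is dismissed as ``quite simple''), there is nothing in the paper to compare it against; your reduction via the identity $s+t-|s-t|=2\min\{s,t\}$, which shows the hypothesis is \emph{equivalent} to $\min\{a,b\}=\min\{c,-d\}$, is a clean way to organize it. More importantly, the ``delicate point'' you raise is genuine: the lemma as literally stated is false. Take $(a,b,c,d)=(1,2,1,-3)$: then $a,b,c>0$, $d<0$, $a\neq b$, and
\[
|a+b|+|c+d|=3+2=5=1+4=|a-b|+|c-d|,
\]
yet $-d=3$ equals neither $a$ nor $b$ (here the equality of minima is witnessed by $a=c=1$). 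So the correct general conclusion is ``$a=-d$, $b=-d$, $a=c$, or $b=c$,'' and the conclusion printed in the paper requires the extra hypothesis $c\geq-d$, exactly as you say. You are also right that this costs the paper nothing: the lemma is invoked only in case (4CAABA) and its mirror case $b>a=-d$, where $c\geq-d$ is among the standing assumptions (and the perturbed forms $A,B$ built there have coefficients satisfying $c\geq b=-d$), so the applications survive once the lemma is restated with $c\geq-d$ added, or with the conclusion weakened to the equality of minima. Carry that hypothesis explicitly in your write-up, as you propose; note also that once $c\geq-d$ is assumed, the hypothesis $a\neq b$ plays no role, since $\min\{a,b\}=-d$ already forces $a=-d$ or $b=-d$.
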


\begin{lemma}
\label{888000}Let $a,b\in\mathbb{R}\backslash\{0\}$.

(a) If $|t_{0}|\leq1$, then $|a+bt_{0}|\leq|a+b|$ or $|a+bt_{0}|\leq|a-b|$.

(b) The maximum of the function $f:[-1,1]$ $\rightarrow\mathbb{R}$ given by
\[
f(t)=|a+bt|+|c+dt|
\]
occurs for $t=-1$ or $t=1$.
\end{lemma}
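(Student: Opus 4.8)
The plan is to prove both parts from the single observation that the map $t\mapsto|a+bt|$ is convex on $[-1,1]$, being the composition of the convex function $|\cdot|$ with an affine map, together with the elementary fact that a convex function on $[-1,1]$ is dominated by the larger of its two endpoint values. The only mechanical ingredient is to write each interior point as a convex combination of the endpoints $\pm1$ and then invoke the triangle inequality.

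Concretely, for $t_{0}\in[-1,1]$ I would set $\lambda=\frac{1+t_{0}}{2}\in[0,1]$, so that $t_{0}=\lambda\cdot1+(1-\lambda)\cdot(-1)$ and hence $a+bt_{0}=\lambda(a+b)+(1-\lambda)(a-b)$. For part (a) the triangle inequality then gives
\[
|a+bt_{0}|\leq\lambda|a+b|+(1-\lambda)|a-b|\leq\max\{|a+b|,|a-b|\}.
\]
Since the right-hand side equals one of $|a+b|$ or $|a-b|$, at least one of the inequalities $|a+bt_{0}|\leq|a+b|$ or $|a+bt_{0}|\leq|a-b|$ must hold, which is exactly the claimed dichotomy.

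For part (b) I would apply the same decomposition to both summands. With the same $\lambda$ one also has $c+dt_{0}=\lambda(c+d)+(1-\lambda)(c-d)$, so adding the two triangle-inequality estimates yields
\[
f(t_{0})\leq\lambda\big(|a+b|+|c+d|\big)+(1-\lambda)\big(|a-b|+|c-d|\big)=\lambda f(1)+(1-\lambda)f(-1)\leq\max\{f(1),f(-1)\}.
\]
As $t_{0}$ was arbitrary in $[-1,1]$, the supremum of $f$ over $[-1,1]$ equals $\max\{f(1),f(-1)\}$ and is attained at $t=1$ or $t=-1$, as required.

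Since both parts reduce to convexity plus the triangle inequality, there is no genuine obstacle here; this is precisely why the authors classify the lemma as elementary and omit the proof. The only point requiring minor care is the bookkeeping of the convex-combination coefficients and verifying that $f(1)=|a+b|+|c+d|$ and $f(-1)=|a-b|+|c-d|$, which matches the endpoint expressions appearing in Proposition \ref{889}.
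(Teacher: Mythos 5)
Your proof is correct. The paper itself omits the proof of this lemma (it explicitly declares the first two lemmata "quite simple" and gives no argument), so there is nothing to compare against; your convexity argument — writing $t_{0}=\lambda\cdot 1+(1-\lambda)\cdot(-1)$ with $\lambda=\frac{1+t_{0}}{2}$ and applying the triangle inequality to get $|a+bt_{0}|\leq\max\{|a+b|,|a-b|\}$ and $f(t_{0})\leq\max\{f(1),f(-1)\}$ — is a clean and complete way to fill that gap, and it even handles arbitrary real $c,d$ in part (b) without needing them nonzero.
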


\begin{lemma}
\label{maxpos}Let $a,b,c>0$ and $d<0$. Then
\[
a+b+|c+d|\geq|a-b|+c-d
\]
if, and only if,
\[
a\geq-d,\ b\geq-d,\ c\geq-d\text{ or }a\geq c,\ b\geq c,\ -d\geq c.
\]

\end{lemma}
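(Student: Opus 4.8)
The plan is to exploit the symmetry of both sides of the inequality in the pair $(a,b)$ and then to split the analysis according to the sign of $c+d$. First I would observe that the quantity $a+b+|c+d|-|a-b|-(c-d)$ is invariant under interchanging $a$ and $b$, and that the two candidate conditions on the right-hand side, namely $a\geq -d,\ b\geq -d,\ c\geq -d$ and $a\geq c,\ b\geq c,\ -d\geq c$, are likewise symmetric in $a$ and $b$. Hence there is no loss of generality in assuming $a\geq b$, so that $|a-b|=a-b$; cancelling $a$ and collecting terms, the inequality to be analyzed collapses to
\[
2b+|c+d|\geq c-d .
\]

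Next I would distinguish two cases by comparing $c$ with $-d>0$. If $c\geq -d$, then $|c+d|=c+d$, and the displayed inequality reduces, after cancelling $c$ and isolating $d$, to the single linear comparison $b\geq -d$; together with the standing assumptions $a\geq b$ and $c\geq -d$ this is exactly the first condition $a\geq -d,\ b\geq -d,\ c\geq -d$. If instead $c<-d$, then $|c+d|=-c-d$, and the inequality reduces to $b\geq c$; together with $a\geq b$ and $-d>c$ this is precisely the second condition $a\geq c,\ b\geq c,\ -d\geq c$. In each case the reduced statement is a single elementary inequality, so both implications (inequality $\Rightarrow$ condition and condition $\Rightarrow$ inequality) are immediate within that case.

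The only point that requires a little care—and the step I expect to be the main bookkeeping obstacle—is checking that the logical \emph{or} of the two conditions matches the case split cleanly, especially on the boundary $c=-d$. Here I would note that the first condition forces $c\geq -d$, so it can be active only in the first case, while the second condition forces $-d\geq c$, so it can be active only in the second case; the two overlap solely when $c=-d$, where both reduce to $b\geq -d=c$, consistent with either reduced inequality. Assembling these observations yields the full equivalence and completes the proof.
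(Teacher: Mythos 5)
Your proof is correct and takes essentially the same route as the paper's: both hinge on the case split $c\geq -d$ versus $-d\geq c$ to resolve $|c+d|$, after which everything reduces to elementary linear inequalities. Your WLOG reduction to $a\geq b$ via the symmetry in $(a,b)$ is a minor streamlining of the paper's handling of $|a-b|$ through the two bounds $\pm(a-b)\leq |a-b|$, and your check at the boundary $c=-d$ correctly settles the only delicate point in matching the case split to the disjunction.
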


\begin{proof}
Suppose that $a,b,c>0$ and $d<0$ are such that
\[
a+b+|c+d|\geq|a-b|+c-d.
\]
If $c\geq-d$, we have
\[
a+b+c+d\geq|a-b|+c-d.
\]
Since $a-b\leq|a-b|$ and $b-a\leq|b-a|=|a-b|$, we have
\[
a+b+c+d\geq a-b+c-d
\]
and thus $b\geq-d.$ We also have
\[
a+b+c+d\geq b-a+c-d
\]
and thus $a\geq-d.$

Now suppose that $-d\geq c.$ Then
\[
a+b-d-c\geq a-b+c-d
\]
and hence $b\geq c$.\ Besides,
\[
a+b-d-c\geq b-a+c-d
\]
and thus $a\geq c.$

Reciprocally, suppose that $a,b,c>0$ and $d<0$ with $a\geq-d,\ b\geq
-d,\ c\geq-d$. Then
\[
b\geq-d\Rightarrow2b\geq-2d\Rightarrow b+d\geq-b-d\Rightarrow a+b+c+d\geq
a-b+c-d
\]
and
\[
a\geq-d\Rightarrow2a\geq-2d\Rightarrow a+d\geq-a-d\Rightarrow a+b+c+d\geq
b-a+c-d.
\]
Hence
\[
a+b+c+d\geq|a-b|+c-d.
\]

Now, suppose that $a,b,c>0$ and $d<0,$ with $a\geq c,\ b\geq c,\ -d\geq c$.
Then
\[
b\geq c\Rightarrow2b\geq2c\Rightarrow b-c\geq-b+c\Rightarrow a+b-c-d\geq
a-b+c-d
\]
and
\[
a\geq c\Rightarrow2a\geq2c\Rightarrow a-c\geq-a+c\Rightarrow a+b-c-d\geq
b-a+c-d.
\]
and we finally obtain
\[
a+b-c-d\geq|a-b|+c-d.
\]

\end{proof}

The next lemma has a proof similar to the proof of the previous lemma, and we
omit it.

\begin{lemma}
\label{maxneg}Let $a,b,c>0$ and $d<0$. Then
\[
|a-b|+c-d\geq a+b+|c+d|
\]
if, and only if,
\[
a\geq b,c\geq b,-d\geq b\text{ or }b\geq a,c\geq a,-d\geq a.
\]

\end{lemma}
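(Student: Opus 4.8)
The plan is to mirror the proof of Lemma~\ref{maxpos}, exploiting the fact that both the inequality $|a-b|+c-d\geq a+b+|c+d|$ and the proposed characterization are invariant under the exchange $a\leftrightarrow b$. First I would use this symmetry to assume without loss of generality that $a\geq b$, so that $|a-b|=a-b$. Under this normalization the inequality rearranges to
\[
c-d-|c+d|\geq 2b,
\]
and everything then reduces to understanding the quantity $c-d-|c+d|$.

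The key step is the elementary identity
\[
c-d-|c+d|=2\min\{c,-d\},
\]
which I would verify by splitting on the sign of $c+d$ (equivalently, on whether $c\geq -d$ or $-d\geq c$): when $c\geq -d$ one has $|c+d|=c+d$ and the left side equals $-2d$, while when $-d\geq c$ one has $|c+d|=-(c+d)$ and the left side equals $2c$. Combining this identity with the reduced inequality shows that, under the standing assumption $a\geq b$, the inequality is equivalent to $\min\{c,-d\}\geq b$, i.e. to $c\geq b$ and $-d\geq b$. Together with $a\geq b$ this is exactly the first alternative $a\geq b,\ c\geq b,\ -d\geq b$ of the statement.

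Finally I would remove the normalization. Applying the same argument with the roles of $a$ and $b$ interchanged (the case $b\geq a$) produces the second alternative $b\geq a,\ c\geq a,\ -d\geq a$, so that in general the inequality holds if and only if one of the two alternatives does; note that each alternative already fixes which of $a\geq b$ or $b\geq a$ holds, so the symmetry reduction is legitimate in both directions. Since every step above is an equivalence, the ``if'' and the ``only if'' directions are obtained simultaneously, in contrast to the two separate implications needed in Lemma~\ref{maxpos}.

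The computations are routine, so the only point requiring care is the genuine difference from Lemma~\ref{maxpos}: there the term $|a-b|$ sits on the smaller side of the inequality, so both estimates $a\geq -d$ and $b\geq -d$ could be read off directly from the bounds $a-b\leq|a-b|$ and $b-a\leq|a-b|$. Here $|a-b|$ sits on the larger side and that one-sided trick is unavailable; the reduction to $a\geq b$ (equivalently, an explicit case split on the sign of $a-b$) is what replaces it, after which the identity for $c-d-|c+d|$ closes the argument.
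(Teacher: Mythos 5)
Your proof is correct, and it is essentially the argument the paper intends: the paper actually omits the proof of Lemma~\ref{maxneg}, remarking only that it is similar to the proof of Lemma~\ref{maxpos}, and that similar proof is precisely your case analysis with the roles of $|a-b|$ and $|c+d|$ exchanged (split according to the sign of $a-b$, then control $|c+d|$ by its two possible values). Your packaging --- the reduction to $a\geq b$ justified by the $a\leftrightarrow b$ symmetry of both sides of the equivalence, together with the identity $c-d-|c+d|=2\min\{c,-d\}$ --- merely streamlines this into a single chain of equivalences instead of the two separate implications used for Lemma~\ref{maxpos}.
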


\begin{lemma}
\label{tec01} Let $a,b,c>0$ and $d<0$. Then
\[
\left\vert \left(  \frac{ab}{cd}\right)  ^{2}(c^{2}+d^{2})-(a^{2}%
+b^{2})\right\vert \leq2ab\left(  1-\frac{ad}{cd}\right)
\]
if, and only if,
\[
\left\vert \frac{cd}{ab}(a-b)\right\vert \leq c-d\text{ and }\left\vert
\frac{ab}{cd}(c+d)\right\vert \leq a+b.
\]

\end{lemma}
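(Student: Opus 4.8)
The plan is to reduce the two-sided absolute-value inequality on the left to a conjunction of two one-sided inequalities and then, after clearing denominators, to recognize two perfect squares that produce exactly the two conditions on the right. Since the whole statement is an equivalence, I would organize the computation as a chain of \emph{reversible} steps and read off the iff at the end.

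First I would record the crucial sign observation: because $a,b,c>0$ and $d<0$ we have $cd<0$, hence $1-\frac{ab}{cd}>1>0$ and the quantity $2ab\left(1-\frac{ab}{cd}\right)$ on the right of the displayed inequality is strictly positive. Writing $X=\left(\frac{ab}{cd}\right)^{2}(c^{2}+d^{2})-(a^{2}+b^{2})$ and $Y=2ab\left(1-\frac{ab}{cd}\right)>0$, the inequality $|X|\le Y$ is equivalent to the pair $X\le Y$ and $-X\le Y$, and I would treat each separately.

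For $X\le Y$, I would move the term $2ab\cdot\frac{ab}{cd}$ to the left, noting that the right-hand side then becomes $(a+b)^{2}$; multiplying through by $(cd)^{2}>0$ collapses the left-hand side into $(ab)^{2}(c+d)^{2}$, using $c^{2}+d^{2}+2cd=(c+d)^{2}$. Taking square roots of the resulting inequality $(ab)^{2}(c+d)^{2}\le(a+b)^{2}(cd)^{2}$ between nonnegative quantities yields $\frac{ab}{|cd|}\,|c+d|\le a+b$, which is precisely $\left|\frac{ab}{cd}(c+d)\right|\le a+b$. For $-X\le Y$, i.e. $X\ge -Y$, the same manipulation produces $c^{2}+d^{2}-2cd=(c-d)^{2}$ on one side and $(a-b)^{2}$ on the other; after multiplying by $(cd)^{2}$ and taking square roots one obtains $\frac{ab}{|cd|}\,|c-d|\ge|a-b|$, equivalently $\left|\frac{cd}{ab}(a-b)\right|\le c-d$ (here $c-d>0$). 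Together these are exactly the two conditions in the statement.

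The only point requiring care—and the reason the argument is a genuine equivalence rather than a one-way implication—is that every operation used (multiplying by $(cd)^{2}$, dividing by $|cd|$ or $ab$, and passing to square roots) is applied to strictly positive or nonnegative quantities, so each step is reversible; in particular $P^{2}\le Q^{2}\iff P\le Q$ for $P,Q\ge 0$. Thus I expect no essential obstacle beyond keeping the sign of $cd$ and the direction of each inequality straight, and I would simply present the two equivalence chains and conclude.
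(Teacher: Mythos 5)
Your proof is correct and takes essentially the same route as the paper's: both split the two-sided inequality $|X|\le Y$ into the pair $X\le Y$ and $-Y\le X$ and show each is equivalent to one of the stated conditions, the only difference being that you spell out the perfect-square manipulations ($c^{2}+d^{2}\pm 2cd$ and $a^{2}+b^{2}\pm 2ab$) that the paper leaves implicit. You also correctly read the factor $1-\frac{ad}{cd}$ in the statement as the intended $1-\frac{ab}{cd}$ (a typo in the paper, consistent with how the lemma is applied), which is what makes the positivity of the right-hand side and the reversibility of every step go through.
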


\begin{proof}
Note that%
\[
\left\vert \left(  \frac{ab}{cd}\right)  ^{2}(c^{2}+d^{2})-(a^{2}%
+b^{2})\right\vert \leq2ab\left(  1-\frac{ad}{cd}\right)
\]
if, and only if,
\[
-2ab\left(  1-\frac{ad}{cd}\right)  \leq\left(  \frac{ab}{cd}\right)
^{2}(c^{2}+d^{2})-(a^{2}+b^{2})\leq2ab\left(  1-\frac{ad}{cd}\right)  .
\]
Moreover
\[
-2ab\left(  1-\frac{ad}{cd}\right)  \leq\left(  \frac{ab}{cd}\right)
^{2}(c^{2}+d^{2})-(a^{2}+b^{2})
\]
if, and only if
\[
\left\vert \frac{cd}{ab}(a-b)\right\vert \leq c-d.
\]
In a similar fashion we show that the other inequality is equivalent to
\[
\left\vert \frac{ab}{cd}(c+d)\right\vert \leq a+b.
\]

\end{proof}

\end{document}